\newtheorem{theorem}{Theorem}[section]
\newtheorem*{theorem*}{Theorem}
\newtheorem{corollary}[theorem]{Corollary}
\newtheorem{lemma}[theorem]{Lemma}
\newtheorem{proposition}[theorem]{Proposition}
\theoremstyle{definition}
\newtheorem{definition}[theorem]{Definition}
\theoremstyle{remark}
\newtheorem{remark}[theorem]{Remark}
\numberwithin{equation}{section}
\newcommand{\C}{\mathbb{C}}
\newcommand{\alfa}{\mathbb{A}}
\newcommand{\es}{\mathcal{S}}
\newcommand{\ti}{\mathcal{T}}
\newcommand{\closure}{{-w^*}}
\newcommand{\normcl}{{-\|\cdot\|}}
\newcommand{\tro}{\mathcal{M}}
\newcommand{\ntro}{\mathcal{N}}
\newcommand{\emm}{\textbf{m}}
\newcommand{\en}{\textbf{n}}
\newcommand{\spacex}{\mathcal{X}}
\newcommand{\spacey}{\mathcal{Y}}
\newcommand{\kk}{\mathcal{K}}
\newcommand{\nsp}{\bar{\otimes}}
\newcommand{\lat}{\text{Lat}}
\title{Morita equivalence and stable isomorphism via unitary operators}
\author{Nikolaos Koutsonikos-Kouloumpis}
\address{
University of Patras\\
Department of Mathematics\\
26504 Patras\\
Greece}
\email{up1019669@ac.upatras.gr}
\date{}
\thanks{2010 {\it  Mathematics Subject Classification.} 47L25, 46L07.}
\thanks{{\it Key words and phrases:} Operator space, operator system, stable isomorphism, Morita equivalence, ternary ring of operators.}
\begin{document}

\begin{abstract}
    We define $\Delta$-equivalence for dual operator systems and prove that it is an equivalence relation. We show that weak TRO-equivalence of dual operator spaces induces a stable isomorphism between them which is given by multiplication with unitary operators, and in the special case of dual operator systems it is a unitary equivalence. We prove an analogous result for strong TRO-equivalence of operator spaces and for operator systems. Lastly, we show that $\Delta$-equivalent dual operator spaces, considered as bimodules over their left and right adjointable multiplier algebras, have TRO-equivalent normal CES representations. 
\end{abstract}

\maketitle

\section{Introduction}

In the 1970's, M. Rieffel \cite{rieffel} introduced to the world of noncommutative analysis the concept of \textit{Morita equivalence}, in a strong version as an equivalence of $C^*$-algebras and in a weak version as an equivalence of $W^*$-algebras. Rieffel was able to transfer most of the results in the theory of Morita equivalence of rings \cite{morita, bass} to the categories of $C^*$- and $W^*$-algebras. 

Following Rieffel, as well as the development of operator space theory \cite{effros}, D. P. Blecher, P. S. Muhly and V. I. Paulsen \cite{bmp} defined an analogue of Morita equivalence for non-selfadjoint operator algebras. Later, D. P. Blecher and U. Kashyap \cite{blkas} defined Morita equivalence for dual operator algebras and derived analogous results.

The idea of introducing \textit{ternary rings of operators} (TRO's) to the theory of Morita equivalence led to a significant progress. The notion of TRO-equivalence \cite{troeq} gave rise to $\Delta$-equivalence, a Morita type equivalence that forms a generalization of Morita equivalence of $W^*$-algebras, to (possibly non-selfadjoint) dual operator algebras. The definition of $\Delta$-equivalence was introduced by G. K. Eleftherakis in \cite{dualopalg}, and then an analogous definition was given for dual operator spaces \cite{stable}. Strong counterparts of $\Delta$-equivalence for operator algebras \cite{ele}, then for operator spaces \cite{strspaces} and finally for operator systems \cite{systems} were defined too, as a generalization of strong Morita equivalence for $C^*$-algebras.

A great demonstration of the significance of Morita equivalence is its close relation to \textit{stable isomorphisms}. Stable isomorphism of $C^*$-algebras was introduced by L. G. Brown, P. Green and M. A. Rieffel \cite{moritacstar}, where the authors called two $C^*$-algebras stably isomorphic in case the $C^*$-algebras formed by tensoring with the $C^*$-algebra of the compact operators acting on a separable Hilbert space are isomorphic as $C^*$-algebras (similar definitions hold for operator algebras, spaces etc.). It was shown in \cite{moritacstar} that two $C^*$-algebras that possess strictly positive elements (or equivalently a countable approximate identity) are Morita equivalent if and only if they are stably isomorphic. G. K. Eleftherakis showed \cite{ele} that two operator algebras with countable approximate identities are $\Delta$-equivalent if and only if they are stably isomorphic and the same was shown in \cite{strspaces} for operator spaces, with a fitting condition that resembles the existence of a countable approximate identity in the forementioned cases. Finally, in \cite{stablemaps} G. K. Eleftherakis introduced $\sigma\Delta$-equivalence and showed that two operator algebras are $\sigma\Delta$-equivalent if and only if they are stably isomorphic.

In the dual picture, two $W^*$-algebras are Morita equivalent if and only if they are stably isomorphic, in the sense that the $W^*$-algebras formed by tensoring with $B(H)$ for some Hilbert space $H$ are $*$-isomorphic via a $w^*$-homeomorphism. The cases of dual operator algebras and dual operator spaces were handled by G. K. Eleftherakis and V. I. Paulsen \cite{stalgebras}, accompanied by I. G. Todorov in the latter case \cite{stable}, who showed the equivalence of stable isomorphism with $\Delta$-equivalence in each particular category.

In this paper, we define $\Delta$-equivalence and stable isomorphism for \textit{dual operator systems}, objects which where introduced by D. P. Blecher and B. Magajna \cite{dualopsys}, and we prove some of the main results of Morita theory in this category's context. We establish the fact that $\Delta$-equivalence implies stable isomorphism and vice versa, as in the cases of dual operator algebras and dual operator spaces (Corollary \ref{dualopsysstable}).

One of the main results of this note however, is an enhancement of the relation between stable isomorphism and TRO-equivalence. As it turns out, TRO-equivalent spaces are not just stably isomorphic, but the isomorphism constructed is given by multiplication with unitary operators, whereas in the case of operator systems it is a unitary equivalence. We show this both for weakly TRO-equivalent dual operator spaces (Theorem \ref{uni}) and for strongly TRO-equivalent operator spaces (Theorem \ref{opspc}) and the case of (dual) operator systems is proved similarly (Theorem \ref{unisys}, Theorem \ref{strsys}).

We begin in Section \ref{prel} by collecting some preliminaries and fixing some notation and then in Section \ref{sec:delta}, we define TRO- and $\Delta$-equivalence for dual operator systems and we show that it is in fact an equivalence relation. In Section \ref{sec:weakstable} we deal with weak $\Delta$-equivalence of dual operator spaces and dual operator systems. We show that TRO-equivalent nondegenerate $w^*$-closed concrete operator spaces (resp. systems) are stably isomorphic via unitary operators (resp. unitary equivalence). Moreover, we use this in order to show that two $\Delta$-equivalent dual operator spaces, considered as dual operator bimodules over their left and right adjointable multiplier algebras $A_\ell(\cdot)$ and $A_r(\cdot)$ admit CES representations, with TRO-equivalent images (Proposition \ref{CEStheorem}) and an analogous result (Proposition \ref{CESsystems}) holds for dual operator systems, too. Finally, in Section \ref{sec:strongstable} we show how strong TRO-equivalence coincides with strong stable isomorphism via unitary operators under the assumption that there exists an appropriate countable approximate identity.

\section{Preliminaries} \label{prel}

For Hilbert spaces $H,K$ we write $B(H,K)$ for the set of all linear bounded operators from $H$ into $K$; as usual we set $B(H):=B(H,H)$. The identity operator in $B(H)$ is denoted by $I_H$ (or just $I$ when the Hilbert space $H$ is implied). By $H\otimes K$ we denote the Hilbert space tensor product of $H$ with $K$. For a cardinal $J$, we set $\ell_J^2$, the Hilbert space of square-summable sequences indexed by the set $J$

For any set $A\subseteq B(H,K)$, we write $\text{alg}_\ell(A)$ and $\text{alg}_r(A)$ for the algebras
\[
\{T\in B(K):TA\subseteq A\} \ \ \text{and} \ \ \{T\in B(H):AT\subseteq A\}
\]respectively, and for an algebra $B\subseteq B(H)$ we set $\Delta(B):=B\cap B^*$.

Let $H,K,L$ be Hilbert spaces and $X,Y$ subspaces of $B(H,K)$ and $B(K,L)$ respectively. We write $\overline{[XH]}$ for the closed linear span of all elements of the form $x\xi$ for $x\in X$ and $\xi\in H$ and $[YX]^\normcl$ (resp. $[YX]^\closure$) for the norm-closed (resp. $w^*$-closed) linear span of all elements of the form $yx$ for $y\in Y$, $x\in X$. We say that $X$ \textit{acts nondegenerately} if $\overline{[XH]}=K$ and $\overline{[X^*K]}=H$. A subspace $\tro\subseteq B(H,K)$ is called a \textit{ternary ring of operators} (TRO) if $\tro\tro^*\tro\subseteq \tro$.

For a vector space $V$, $M_{m,n}(V)$ denotes the set of all $m\times n$ matrices with elements in $V$, with the obvious linear structure. If $\phi:V\to W$ is a linear map, its $n$-th \textit{amplification} $\phi^{(n)}:M_n(V)\to M_n(W)$ is the map $(v_{i,j})_{i,j=1}^n\mapsto (\phi(v_{i,j}))_{i,j}^{n}$. For cardinals $J_1,J_2$, $M_{J_1,J_2}$ denotes the space of all $J_1\times J_2$ matrices with elements in $\C$, whose finite submatrices have uniformly bounded norms. The norm of a matrix in $M_{J_1, J_2}$ is set to be the supremum of norms of all its finite submatrices and we have $M_{J_1,J_2}\cong B(\ell_{J_2}^2,\ell_{J_1}^2)$ $*$-isomorphically. The norm closure of the space of all matrices in $M_{J_1,J_2}$ with all but finitely many entries equal to zero is denoted by $\kk_{J_1,J_2}$, which is identified with the set of all compact operators in $B(\ell_{J_2}^2,\ell^2_{J_1})$ and we set $M_J:=M_{J, J}$, $\kk_J:=\kk_{J,J}$, $C_J:=\kk_{J,1}=M_{J,1}$ and $R_J:=\kk_{1,J}=M_{1,J}$. We use the notation $\kk_\infty$, $M_\infty$ etc. for $J=\mathbb{N}$.

A \textit{concrete unital operator system} (or just \textit{concrete operator system}) is a norm-closed subspace $\es\subseteq B(H)$ that is self-adjoint (i.e. $\es^*=\es$) and contains the unit $I_H$. We refer the reader to \cite[Chapter 13]{paulsen} for the definition of an \textit{(abstract) unital operator system}, or just \textit{operator system}. A linear map $\phi:\es\to \ti$ between operator systems is called \textit{positive} if $\phi(s)\geq 0$ for all $s\geq 0$, and it is called \textit{completely positive} if $\phi^{(n)}$ is positive for all $n$. An injective map $\phi:\es\to \ti$ between operator systems is called a \textit{complete order embedding} if $s\geq 0$ if and only if $\phi^{(n)}(s)\geq 0$ for all $n\in\mathbb{N}$ and $s\in M_n(\es)$. 

For an operator space $X$, any bounded linear functional $\phi:X\to \mathbb{C}$ is completely bounded, thus we have $X^*=CB(X,\mathbb{C})$, the space of all completely bounded linear functionals, which has the structure of an operator space (see e.g. \cite{blecher} or \cite{effros}). We call such a space a \textit{dual operator space}. As a Banach space dual, every dual operator space carries a $w^*$-topology, and by \cite[Proposition 3.2.4]{effros}, it can be viewed as a $w^*$-closed subspace of $B(H)$ for some Hilbert space $H$. 

For two operator spaces $X,Y$, we denote by $X\otimes Y$ their \textit{minimal tensor product}. In case $X\subseteq B(H_1,H_2)$ and $Y\subseteq B(K_1,K_2)$, $X\otimes Y$ is completely isometric to the closed linear span in $B(H_1\otimes K_1,H_2\otimes K_2)$ of all elements of the form $x\otimes y$, $x\in X$, $y\in Y$. If $X,Y$ are dual operator spaces, we denote by $X\nsp Y$ their \textit{normal spatial tensor product}. In case $X\subseteq B(H_1,H_2)$ and $Y\subseteq B(K_1,K_2)$ are $w^*$-closed subspaces, $X\nsp Y$ is $w^*$-homeomorphically completely isometric to the $w^*$-closed linear span in $B(H_1\otimes K_1,H_2\otimes K_2)$ of all elements of the form $x\otimes y$, $x\in X$, $y\in Y$. For more details, the reader is referred to \cite[Sections 1.5, 1.6]{blecher}.

A \textit{dual operator system} \cite{dualopsys} is an operator system that is also a dual operator space. By \cite[Theorem 1.1]{dualopsys} any dual operator system may be realized as a $w^*$-closed operator subsystem of $B(H)$ for a Hilbert space $H$. 

If $A,B$ are approximately unital operator algebras, an \textit{operator $A-B$-bimodule} is an operator space $X$ equipped with completely contractive bilinear maps $A\times X\to X$, $X\times B\to X$. An operator $A-B$-bimodule is called \textit{nondegenerate} if $[AX]^\normcl=[XB]^\normcl=X$. If $A,B$ are unital dual operator algebras and $X$ is an operator space, with the additional assumption that the module actions $A\times X\to X$, $X\times B\to X$ are separately $w^*$-continuous, $X$ is called a \textit{dual operator $A-B$-bimodule}, and in this case it is called nondegenerate if $[AX]^\closure=[XB]^\closure=X$. 

If $X$ is an operator $A-B$-bimodule, a \textit{CES representation} for $X$ is a triple $(\pi,\phi,\sigma)$ of maps, where $\pi:A\to B(K)$ and $\sigma:B\to B(H)$ are completely contractive representations of $A,B$ on the Hilbert spaces $K$ and $H$ respectively, and $\phi:X\to B(H,K)$ is a linear complete isometry such that
\[
\phi(axb)=\pi(a)\phi(x)\sigma(b)
\]for all $a\in A$, $x\in X$, $b\in B$. In case $\pi,\sigma$ are completely isometric, the CES representation is called \textit{faithful}. By \cite[Theorem 3.3.1]{blecher}, every nondegenerate $A-B$-bimodule admits a faithful CES representation. 

If $X$ is a dual operator $A-B$-bimodule, a \textit{normal CES representation} for $X$ is a triple $(\pi,\phi,\sigma)$ of maps as before, so that they are all $w^*$-continuous and $\pi:A\to B(K)$, $\sigma:B\to B(H)$ are unital. In case $\pi,\sigma$ are completely isometric, it is called a \textit{faithful normal CES representation}. By \cite[Theorem 3.8.3]{blecher}, every dual operator $A-B$-bimodule admits a faithful normal CES representation.

For the definitions of \textit{left} (resp. \textit{right}) \textit{multipliers} of an operator space $X$ and the corresponding \textit{left} (resp. \textit{right}) \textit{multiplier algebra}, $M_\ell(X)$ (resp. $M_r(X)$), as well as the \textit{left} (resp. \textit{right}) \textit{adjointable multiplier algebra} $A_\ell(X)$ (resp. $A_r(X)$), the reader is referred to \cite[Section 4.5]{blecher}. We note that by \cite[Theorem 4.5.5]{blecher} (see also \cite[4.6.6]{blecher}), every operator space $X$ is a $M_\ell(X)-M_r(X)$-bimodule, as well as a $A_\ell(X)-A_r(X)$-bimodule.

When $X$ is a dual operator space, we have the following: 
\begin{theorem}{\cite[Theorems 4.7.1, 4.7.4]{blecher}} \label{magajna}
    If $X$ is a dual operator space, every map in $M_\ell(X)$ (thus every map in $A_\ell(X)$ too) is $w^*$-continuous. Furthermore, $M_\ell(X)$ is a dual operator algebra and $A_\ell(X)$ is a $W^*$-algebra and for every bounded net $(u_\lambda)\subseteq M_\ell(X)$ and $u\in M_\ell(X)$ (or $A_\ell(X)$), $u_\lambda$ converges to $u$ in the $w^*$-topology if and only if $u_\lambda(x)\to u(x)$ in the $w^*$-topology for each $x\in X$. 
\end{theorem}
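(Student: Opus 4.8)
The plan is to trade the abstract, module-theoretic description of $M_\ell(X)$ for a concrete spatial one in which left multipliers are literally operator products. Using the theory of the (ternary) Shilov envelope of an operator space \cite[Section 4.5]{blecher}, together with the fact that a dual operator space admits a $w^*$-closed concrete representation, I would realize $X$ as a nondegenerately acting $w^*$-closed subspace of some $B(H,K)$, chosen so that the canonical map
\[
M_\ell(X) \longrightarrow \{a \in B(K) : aX \subseteq X\} = \text{alg}_\ell(X), \qquad a \longmapsto (x \mapsto ax),
\]
is a faithful, surjective, completely isometric homomorphism, and such that under it $A_\ell(X)$ corresponds to $\Delta(\text{alg}_\ell(X)) = \text{alg}_\ell(X) \cap \text{alg}_\ell(X)^*$. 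With this structural input secured, everything reduces to elementary operator theory inside $B(K)$.

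First I would dispose of $w^*$-continuity. For a fixed $a \in B(K)$ the map $x \mapsto ax$ is $w^*$-continuous on $B(H,K)$, since for a trace-class $\omega$ one has $\langle ax, \omega\rangle = \langle x, \omega a\rangle$ and $\omega a$ is again trace class; restricting to $X$, whose $w^*$-topology is the relative one because $X$ is $w^*$-closed, shows that every element of $M_\ell(X)$, hence of $A_\ell(X) \subseteq M_\ell(X)$, is $w^*$-continuous. Next, the algebraic structure: $\text{alg}_\ell(X)$ is a unital subalgebra of $B(K)$ that is $w^*$-closed, for if $a_i \in \text{alg}_\ell(X)$ and $a_i \to a$ in the $w^*$-topology then $a_i x \to ax$ in the $w^*$-topology for each $x \in X$ with $a_i x \in X$, whence $ax \in X$. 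A $w^*$-closed unital subalgebra of $B(K)$ is a dual operator algebra, which settles $M_\ell(X)$; and $\Delta(\text{alg}_\ell(X))$ is a $w^*$-closed $*$-subalgebra of $B(K)$, i.e.\ a von Neumann algebra, so $A_\ell(X)$ is a $W^*$-algebra.

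Then I would prove the bounded-net criterion. One implication is the same continuity computation as above, applied to right multiplication by the fixed operator $x$: if $u_\lambda \to u$ in the $w^*$-topology of $M_\ell(X) \cong \text{alg}_\ell(X) \subseteq B(K)$, then $u_\lambda x \to ux$ in the $w^*$-topology for every $x \in X$. For the converse, let $(u_\lambda)$ be bounded with $u_\lambda x \to ux$ in the $w^*$-topology for every $x \in X$. Any subnet of $(u_\lambda)$ lies in a fixed $w^*$-compact ball of $B(K)$, hence has a further subnet converging in the $w^*$-topology to some $v \in B(K)$; by the implication just proved, $vx = \lim u_{\lambda'} x = ux$ for all $x \in X$, so $v \in \text{alg}_\ell(X)$ and $v$ induces the same multiplier of $X$ as $u$, whence $v = u$ by faithfulness of the identification. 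Thus $u$ is the unique $w^*$-cluster point of $(u_\lambda)$ inside a $w^*$-compact set, which forces $u_\lambda \to u$ in the $w^*$-topology. The identical argument applies to $A_\ell(X)$, which is $w^*$-closed in $M_\ell(X)$.

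The hard part is the opening step: producing a single representation of $X$ that is simultaneously $w^*$-closed, nondegenerate, and Shilov, and verifying that under it $M_\ell(X)$ is exactly $\{a \in B(K) : aX \subseteq X\}$ with the identification faithful and a $w^*$-homeomorphism. In an arbitrary $w^*$-closed representation the algebra $\{a \in B(K) : aX \subseteq X\}$ may be strictly larger than $M_\ell(X)$, so the choice of representation does real work; this is exactly the content drawn from the multiplier and Shilov-envelope theory of \cite[Sections 4.5, 4.7]{blecher}, and once it is granted the remaining steps are routine bookkeeping.
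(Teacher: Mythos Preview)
The paper does not prove this statement; it is quoted from \cite[Theorems 4.7.1, 4.7.4]{blecher} and used as a black box throughout. There is therefore no proof in the paper to compare your proposal against.

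On the proposal itself: once your opening step is granted, the remaining steps are correct and routine. The concern is that the opening step is circular. You want a single representation of $X$ that is simultaneously (a) $w^*$-closed with the inherited $w^*$-topology matching the abstract one, and (b) Shilov-type, so that $M_\ell(X)$ coincides with all of $\text{alg}_\ell(X)$. Property (a) is the standard dual-operator-space embedding; property (b) comes from the injective or triple envelope. These are different constructions, and the triple envelope of a dual operator space need not carry a $w^*$-topology compatible with that of $X$. You defer this to ``the multiplier and Shilov-envelope theory of \cite[Sections 4.5, 4.7]{blecher}'', but the results in Section 4.7 that would deliver such a representation are exactly Theorems 4.7.1 and 4.7.4 --- the statement you are proving. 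The argument in \cite{blecher} does not begin by producing such a representation; it works intrinsically with the matrix-norm characterization of left multipliers (roughly, $\|u\|_{M_\ell(X)}\le 1$ iff $\left[\begin{smallmatrix} x\\ y\end{smallmatrix}\right]\mapsto\left[\begin{smallmatrix} u(x)\\ y\end{smallmatrix}\right]$ is completely contractive on $C_2(X)$) together with the Krein-Smulian theorem to obtain $w^*$-continuity directly, and the dual-algebra structure of $M_\ell(X)$ is then read off from the point-$w^*$ topology on $CB(X)$. The good representation you seek is a \emph{consequence} of the theorem, not an ingredient.
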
 
\noindent It follows directly in this case that $X$ is a dual operator $M_\ell(X)-M_r(X)$- and $A_\ell(X)-A_r(X)$-bimodule.

In case $\es$ is an operator system, $\es$ can be viewed as a subsystem of its $C^*$-envelope \cite{envelope}, $C_e^*(\es)$ and 
\begin{equation}
\label{leftm}
    M_\ell(\es)\cong\{a\in C_e^*(\es):a\es\subseteq \es\}
\end{equation}
as operator algebras, by associating an element $a$ in the right hand side to the left multiplier $s\mapsto as$. It is easy to see, using \eqref{leftm} that $M_\ell(\es)$ can be embedded unitally completely isometrically into $\es$ via the map $u\mapsto u(1)$. Moreover, 
\begin{equation}
\label{adjointable}
    A_\ell(\es)\cong \{a\in C_e^*(\es):a\es\subseteq \es, a^*\es\subseteq \es\}
\end{equation}
as $C^*$-algebras. By \eqref{adjointable}, we can embed $A_\ell(\es)$ into $\es$ via the map $u\mapsto u(1)$, which is a unital complete isometry, or equivalently a unital complete order embedding. By \eqref{adjointable}, it is clear that $A_\ell(\es)\cong A_r(\es)$ as $C^*$-algebras when $\es$ is an operator system (note that $A_r(\es)$ is equipped with the reverse structure of the usual composition multiplication). Writing $\iota_\es:A_\ell(\es)\to \es$ for the forementioned embedding, we will denote $\iota_\es(A_\ell(\es))\subseteq \es$ by $\alfa_\es$ (we will treat elements of $A_\ell(\es)$ as maps $u:\es\to \es$, whereas we think of $\alfa_\es$ as a $C^*$-subalgebra of $C_e^*(\es)$ contained in $\es$). Then, one can easily check that every operator system $\es$ is an operator $\alfa_\es$-system in the sense of \cite[Chapter 15]{paulsen}.

In case $\es$ is a dual operator system, thanks to the latter fact, combined with the Krein-Smulian Theorem, the inclusion map $\iota_\es:A_\ell(\es)\to \es$ is $w^*$-continuous, hence a $w^*$-homeomorphism onto its image, $\alfa_\es$. However, $C^*_e(\es)$ is not expected to carry a dual structure compatible with the dual structure of $\es$, so in order to keep track of the $w^*$-topology of $\es$, one has to be careful in the use of the $C^*$-envelope. It is clear by Theorem \ref{magajna} though, that the module action from $\alfa_\es\times \es$ into $\es$, mapping $(a,s)$ to $as$ is separately $w^*$-continuous.

We recall by \cite{lin} that for a $W^*$-algebra $\mathcal{A}$ and an operator system $X$, $X$ is called a \textit{dual operator $\mathcal{A}$-system} if:
\begin{itemize}
    \item[(i)] $X$ is an operator $\mathcal{A}$-system,
    \item[(ii)] $X$ is a dual operator system, and
    \item[(iii)] the map $(a,x)\mapsto a\cdot x$ from ${\mathcal{A}}\times X$ into $X$ is separately $w^*$-continuous. 
\end{itemize}

\begin{theorem}{\cite[Theorem 4.7]{lin}}
\label{rep}
    Suppose $\mathcal{A}$ is a $W^*$-algebra and $X$ a dual operator $\mathcal{A}$-system. There exists a Hilbert space $H$, a normal unital complete order embedding $\gamma: X\to B(H)$ and a unital $*$-homomorphism $\pi:{\mathcal{A}}\to B(H)$ such that 
    \[
    \gamma(a\cdot x)=\pi(a)\gamma(x)
    \]for all $a\in \mathcal{A}$, $x\in X$.
    
\end{theorem}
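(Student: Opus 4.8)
The plan is to extract the pair $(\gamma,\pi)$ from the dual operator bimodule hidden in the operator $\mathcal A$-system $X$, and then to straighten it out using the order unit $1_X$. First I would view $X$ as a dual operator bimodule: being an operator $\mathcal A$-system it is a nondegenerate operator $\mathcal A$--$\mathcal A$-bimodule with $1_{\mathcal A}\cdot x=x=x\cdot 1_{\mathcal A}$ and $(a\cdot x)^*=x^*\cdot a^*$, and by hypothesis (iii) together with the $w^*$-continuity of the involution on $X$ both module actions are separately $w^*$-continuous; since $\mathcal A$ is a $W^*$-algebra (so a unital dual operator algebra), this exhibits $X$ as a nondegenerate dual operator $\mathcal A$--$\mathcal A$-bimodule. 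By \cite[Theorem 3.8.3]{blecher} it then admits a faithful normal CES representation $(\pi_0,\phi,\sigma_0)$: normal unital $*$-homomorphisms $\pi_0\colon\mathcal A\to B(K)$ and $\sigma_0\colon\mathcal A\to B(H)$ together with a $w^*$-continuous complete isometry $\phi\colon X\to B(H,K)$ satisfying $\phi(a\cdot x\cdot b)=\pi_0(a)\phi(x)\sigma_0(b)$. (Taking instead a faithful $*$-representation of $\env(X)$ would also produce an intertwining pair, but one that is not $w^*$-continuous on $X$; this is the reason for passing through the bimodule theory.) Finally, replacing $K,H$ by the reducing subspaces on which $\phi(X)$ acts nondegenerately -- a routine reduction, harmless because $\pi_0(a)\phi(x)=\phi(a\cdot x)$ and $\phi(x)\sigma_0(a)=\phi(x\cdot a)$ show these subspaces reduce $\pi_0(\mathcal A)$ and $\sigma_0(\mathcal A)$, so that $\pi_0,\sigma_0$ remain unital $*$-homomorphisms -- I may and do assume $\phi(X)$ acts nondegenerately on $H$ and on $K$.

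Next I would identify $e:=\phi(1_X)\in B(H,K)$ with a unitary operator. The unit $1_X$ is a unitary element of the operator space $X$ -- it is the unit of the unital $C^*$-algebra $\env(X)$ -- and $\phi$ is a complete isometry whose image acts nondegenerately; hence $e$ is a unitary operator from $H$ onto $K$ (see \cite{blecher}), with $e^*e=I_H$ and $ee^*=I_K$. Now set $\gamma:=e^*\phi(\,\cdot\,)\colon X\to B(H)$. Left multiplication by the fixed operator $e^*$ is $w^*$-continuous, so $\gamma$ is again a $w^*$-continuous complete isometry, and $\gamma(1_X)=e^*e=I_H$, so $\gamma$ is unital. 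A unital complete isometry between operator systems is a complete order embedding (Choi--Effros; see \cite[Chapter 13]{paulsen}), and then the Krein--Smulian theorem shows $\gamma$ is a $w^*$-homeomorphism onto its ($w^*$-closed) image. Thus $\gamma$ is a normal unital complete order embedding $X\to B(H)$.

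It remains to check covariance. Realizing the $\mathcal A$-action through the canonical unital $*$-homomorphism $\mathcal A\to A_\ell(X)\cong\{c\in\env(X):c X\subseteq X,\ c^*X\subseteq X\}\subseteq\env(X)$ furnished by \eqref{leftm} and \eqref{adjointable}, both $a\cdot 1_X$ and $1_X\cdot a$ equal the product of $1_X$ with the image of $a$ in $\env(X)$, so $a\cdot 1_X=1_X\cdot a$ and hence $\pi_0(a)e=\phi(a\cdot 1_X)=\phi(1_X\cdot a)=e\sigma_0(a)$ for every $a\in\mathcal A$. Since $e$ is unitary this yields $e^*\pi_0(a)=\sigma_0(a)e^*$, so, putting $\pi:=\sigma_0$ (a unital $*$-homomorphism $\mathcal A\to B(H)$), we get for all $a\in\mathcal A$, $x\in X$
\[
\gamma(a\cdot x)=e^*\phi(a\cdot x)=e^*\pi_0(a)\phi(x)=\sigma_0(a)e^*\phi(x)=\pi(a)\gamma(x),
\]
which is the desired intertwining identity.

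I expect the genuine obstacle to be the second step. A complete isometry between operator systems need not preserve positivity, so one cannot simply declare the CES map $\phi$ to be a complete order embedding; one must really use that $1_X$ is an order unit, and the clean way to do so is to observe that the order unit is a unitary element of the underlying operator space and that, after reducing to a nondegenerate realization, such elements are carried to genuine unitary operators -- which is precisely what turns $\phi$ into the complete order embedding $\gamma$. By comparison, the reinterpretation of $X$ as a dual operator bimodule, the nondegeneracy reduction, and the covariance computation are essentially mechanical.
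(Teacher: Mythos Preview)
The paper does not supply a proof of this statement; it is quoted verbatim from \cite[Theorem 4.7]{lin} and used as a black box. So there is no ``paper's proof'' to compare with, and your attempt is a genuinely independent one.

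Your overall strategy is natural, but the second step contains a real gap. You assert that because $1_X$ is the identity of $\env(X)$ and $\phi$ is a nondegenerate complete isometry, $e:=\phi(1_X)$ must be a unitary operator. This is false. Take $X=\mathbb{C}$ (so also $\mathcal A=\mathbb{C}$) and define $\phi:\mathbb{C}\to B(\ell^2)$ by $\phi(c)=c\,e$ with $e=\mathrm{diag}(1,\tfrac12,\tfrac13,\dots)$. Then $\phi$ is a normal complete isometry (indeed a faithful normal CES representation, with $\pi_0=\sigma_0$ the scalar action), its image acts nondegenerately since $e$ is injective with dense range, yet $e^*e=e^2\neq I$. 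Consequently $\gamma:=e^*\phi$ satisfies $\gamma(1_X)=e^2\neq I$, so $\gamma$ is not unital and the rest of the argument collapses. The heuristic ``abstract unitaries go to concrete unitaries under nondegenerate complete isometries'' is simply not a theorem; what \emph{is} true is that they go to unitaries under nondegenerate \emph{triple} (TRO) morphisms, and a bare complete isometry need not be one.

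What is missing is a mechanism that forces the embedding to respect the order unit, not merely the matrix norms. The argument in \cite{lin} builds $\gamma$ from the outset as a direct sum of normal unital completely positive maps $X\to B(H_\alpha)$ that already restrict to $*$-homomorphisms on the copy of $\mathcal A$ inside $X$ (``$\mathcal A$-admissible'' maps), and then shows that enough of these exist to make the direct sum completely isometric. Alternatively, one can start from any normal unital complete order embedding $X\hookrightarrow B(L)$ (available by \cite{dualopsys}) and then dilate so that the image of $\mathcal A$ becomes multiplicative; either way one never leaves the category of unital completely positive maps, so the unitarity of the image of $1_X$ is automatic rather than something to be proved after the fact. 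Your route through \cite[Theorem 3.8.3]{blecher} discards the order structure at step one and then tries to recover it from the operator-space data alone, and the counterexample shows that this cannot be done in general.
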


\begin{remark}
\label{rep2}
    It is clear by the discussion in the previous paragraph that a dual operator system $\es$ is a dual operator $\alfa_\es$-system. Let $H,\gamma,\pi$ be as in Theorem \ref{rep}. Since $\alfa_\es$ embeds completely isometrically into $\es$, we get that $\pi$ is also a complete isometry, and it is in fact the restriction of $\gamma$ to $\alfa_\es$. 
\end{remark}

\section{\texorpdfstring{$\Delta$-equivalence}{Delta-equivalence}} \label{sec:delta}

We start by investigating the nature of $\Delta$-equivalence for dual operator systems and its main properties. 

\begin{definition}
    Two $w^*$-closed operator subsystems $\es\subseteq B(H)$ and $\ti\subseteq B(K)$ are called \textit{TRO-equivalent} in case there exists a TRO $\tro\subseteq B(H,K)$ such that $[\tro\es\tro^*]^\closure=\ti$ and $[\tro^*\ti\tro]^\closure=\es$. In that case we write $\es\sim_{TRO}\ti$ or $\es\sim_\tro\ti$ in case we want to specify the TRO $\tro$ which implements the equivalence.
\end{definition}

\begin{remark}
    If $\es\sim_{\tro}\ti$, then $\tro$ acts nondegenerately because of the fact that $\es,\ti$ are unital, therefore $[\tro^*\tro]^\closure$ and $[\tro\tro^*]^\closure$ are unital and they are von Neumann subalgebras of $B(H)$ and $B(K)$ respectively. Furthermore, $\es$ (resp. $\ti$) is a bimodule over $[\tro^*\tro]^\closure$ (resp. $[\tro\tro^*]^\closure$). 
\end{remark}

\begin{remark}
\label{nondeg}
    In case $\tro$ acts nondegenerately and $\tro\es\tro^*\subseteq \ti$, $\tro^*\ti\tro\subseteq\es$, $[\tro^*\tro]^\closure$ and $[\tro\tro^*]^\closure$ are unital and therefore we get:
    \[
    \es=[\tro^*\tro]^\closure\es[\tro^*\tro]^\closure\subseteq [\tro^*\tro\es\tro^*\tro]^\closure\subseteq [\tro^*\ti\tro]^\closure\subseteq \es
    \]which establishes that $\es=[\tro^*\ti\tro]^\closure$ and $\ti=[\tro\es\tro^*]^\closure$ by the same arguments. 
\end{remark}
The proof of the following Proposition follows the same arguments as in the proof of \cite[Theorem 2.1]{ele} so we omit it:

\begin{theorem}
\label{troeq}
    TRO equivalence is an equivalence relation.
\end{theorem}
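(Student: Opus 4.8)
The plan is to verify the three defining properties of an equivalence relation---reflexivity, symmetry, and transitivity---for the relation $\sim_{TRO}$ on $w^*$-closed operator subsystems, following the blueprint of \cite[Theorem 2.1]{ele}.

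First, for \textbf{reflexivity}, given $\es\subseteq B(H)$ a $w^*$-closed operator subsystem, I would take $\tro=\C I_H$, which is trivially a TRO acting nondegenerately, and observe that $[\tro\es\tro^*]^\closure=\es=[\tro^*\es\tro]^\closure$ since $\es$ is $w^*$-closed and contains $I_H$. Hence $\es\sim_\tro\es$.

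Next, for \textbf{symmetry}, suppose $\es\sim_\tro\ti$ with $\tro\subseteq B(H,K)$. I would set $\ntro:=\tro^*\subseteq B(K,H)$, which is again a TRO since $\ntro\ntro^*\ntro=\tro^*\tro\tro^*\subseteq\tro^*=\ntro$. The defining identities $[\tro\es\tro^*]^\closure=\ti$ and $[\tro^*\ti\tro]^\closure=\es$ become exactly $[\ntro^*\es\ntro]^\closure=\ti$ and $[\ntro\ti\ntro^*]^\closure=\es$, so $\ti\sim_\ntro\es$.

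The main work, and the expected obstacle, is \textbf{transitivity}. Suppose $\es\subseteq B(H)$, $\ti\subseteq B(K)$, $\mathcal{R}\subseteq B(L)$ are $w^*$-closed operator subsystems with $\es\sim_\tro\ti$ via $\tro\subseteq B(H,K)$ and $\ti\sim_\ntro\mathcal{R}$ via $\ntro\subseteq B(K,L)$. The natural candidate is the $w^*$-closed linear span $\mathcal{P}:=[\ntro\tro]^\closure\subseteq B(H,L)$. I would first check that $\mathcal{P}$ is a TRO: this uses $\tro\tro^*\tro\subseteq\tro$, $\ntro\ntro^*\ntro\subseteq\ntro$, together with the fact that $[\tro\tro^*]^\closure$ is a von Neumann algebra acting nondegenerately on $K$ (from the remark following the definition), so that $[\ntro\tro\tro^*\ntro^*]^\closure=[\ntro[\tro\tro^*]^\closure\ntro^*]^\closure$ absorbs appropriately; the delicate point is controlling $w^*$-closures under products, which is handled by the Krein--Smulian theorem and the boundedness of the relevant sets. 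Then I would verify $[\mathcal{P}\es\mathcal{P}^*]^\closure=\mathcal{R}$: one has $[\mathcal{P}\es\mathcal{P}^*]^\closure=[\ntro\tro\es\tro^*\ntro^*]^\closure$, and since $[\tro\es\tro^*]^\closure=\ti$ and $\ntro$ is bounded, a standard approximation argument (again via Krein--Smulian, using that multiplication by a fixed bounded operator is $w^*$-continuous on bounded sets) gives $[\ntro\tro\es\tro^*\ntro^*]^\closure=[\ntro\ti\ntro^*]^\closure=\mathcal{R}$. The identity $[\mathcal{P}^*\mathcal{R}\mathcal{P}]^\closure=\es$ follows symmetrically. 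The crux throughout is the careful manipulation of iterated $w^*$-closed spans, where one cannot naively pull closures in and out of products; the remedy, as in \cite{ele}, is to work with bounded nets and invoke separate $w^*$-continuity of multiplication on bounded sets plus the Krein--Smulian theorem, and to use the unitality of the von Neumann algebras $[\tro^*\tro]^\closure$, $[\tro\tro^*]^\closure$, $[\ntro^*\ntro]^\closure$, $[\ntro\ntro^*]^\closure$ to re-absorb factors as in Remark \ref{nondeg}.
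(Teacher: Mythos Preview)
Your proposal is correct and takes exactly the approach the paper indicates: the paper omits the proof entirely, saying only that it ``follows the same arguments as in the proof of \cite[Theorem 2.1]{ele}'', and your sketch is precisely a transcription of that argument (reflexivity via $\C I_H$, symmetry via $\tro^*$, transitivity via the composite $\mathcal{P}=[\ntro\tro]^\closure$) adapted to the $w^*$-closed operator-system setting, invoking Remark~\ref{nondeg} and separate $w^*$-continuity of multiplication in the right places. The only point to flesh out beyond ``absorbs appropriately'' is the verification that $\mathcal{P}$ is itself a TRO---in \cite{ele} this uses that the middle object is an algebra, whereas here $\ti$ is merely an operator system, so one should record explicitly that $\tro\tro^*\subseteq\ti$, $\ntro^*\ntro\subseteq\ti$, and that $\ti$ is a bimodule over both $[\tro\tro^*]^\closure$ and $[\ntro^*\ntro]^\closure$, which is what makes the absorption go through.
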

Based on the previous, we define $\Delta$-equivalence for (abstract) dual operator systems:
\begin{definition}
    Two (abstract) dual operator systems $\es$, $\ti$ are called $\Delta$-equivalent in case there exist normal unital complete order embeddings $\phi:\es\to B(H)$, $\psi:\ti\to B(K)$ such that $\phi(\es)\sim_{TRO}\psi(\ti)$.
\end{definition}

Unlike TRO equivalence, the fact that $\Delta$-equivalence is an equivalence relation requires a bit of work. We will follow the main ideas as in the proof of the analogous result in \cite{strspaces}, with minor variations, so that they adapt to the dual case. We start by proving the following:

\begin{lemma}
\label{trom}
    Let $\es\subseteq B(H)$ and $\ti\subseteq B(K)$ be $w^*$-closed operator subsystems with $\es\sim_\tro\ti$ and suppose $\phi:\es\to B(H_0)$ is a normal unital completely positive map such that $\phi$ restricted to $[\tro^*\tro]^\closure$ is a $*$-homomorphism. Then, there exist a Hilbert space $K_0$ and a normal unital completely positive map $\psi:\ti\to B(K_0)$, such that:
    \begin{itemize}
        \item $\psi$ restricted to $[\tro\tro^*]^\closure$ is a $*$-homomorphism, and
        \item $\overline{\phi(\es)}^{w^*}\sim_{TRO}\overline{\psi(\ti)}^{w^*}$.
    \end{itemize}
    If in addition $\phi$ is a complete order embedding (equivalently a complete isometry), $\psi$ can be chosen to be a complete order embedding too, therefore their ranges are $w^*$-closed and $\phi(\es)\sim_{TRO}\psi(\ti)$.
\end{lemma}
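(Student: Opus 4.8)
The goal is, given the TRO $\tro\subseteq B(H,K)$ implementing $\es\sim_\tro\ti$ and a normal u.c.p. map $\phi:\es\to B(H_0)$ whose restriction to the von Neumann algebra $\mathcal{B}:=[\tro^*\tro]^\closure$ is a $*$-homomorphism, I would build $\psi$ by transporting the TRO through $\phi$. The natural candidate is to realize $\ti$ (up to unitary equivalence) as $[\tro\es\tro^*]^\closure$ and then apply a "$\phi$-twisted" version of the same recipe. Concretely, since $\phi|_{\mathcal{B}}$ is a normal $*$-homomorphism on a von Neumann algebra, I would first use the standard dilation/decomposition theory: a normal $*$-homomorphism of a von Neumann algebra is, up to amplification and a corner, essentially an ampliation, so one can write $\phi|_{\mathcal{B}}(\cdot)=W(\cdot\otimes I)W^*$ on a suitable space after compressing, or more cleanly use a Stinespring-type normal dilation of $\phi$ itself, $\phi(s)=V^*\pi(s)V$ with $\pi:\es\to B(L)$ a normal unital complete order embedding (a normal Stinespring dilation for operator systems, which exists by Arveson-type arguments together with the normality hypothesis), where $V:H_0\to L$ is an isometry and $\pi|_{\mathcal{B}}$ is a $*$-homomorphism extending the one induced by $\phi$.

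Next I would push the TRO forward: set $\ntro := [\pi'(\tro)\, \text{(acting via the dilation of }\mathcal{B}\text{)}]$, i.e. use the $*$-homomorphism on $\mathcal{B}=[\tro^*\tro]^\closure$ to induce a ternary morphism on $\tro$ (a $*$-homomorphism of the linking von Neumann algebra $\begin{pmatrix}[\tro\tro^*]^\closure & \tro\\ \tro^* & [\tro^*\tro]^\closure\end{pmatrix}$ restricted appropriately), obtaining a new TRO $\ntro$ on the dilation space together with a normal u.c.p. map on $\ti$; then compress back by $V$ (and the corresponding isometry on the $K$-side). The upshot should be a Hilbert space $K_0$, a normal u.c.p. $\psi:\ti\to B(K_0)$ with $\psi|_{[\tro\tro^*]^\closure}$ a $*$-homomorphism, and a TRO $\ntro_0\subseteq B(H_0,K_0)$ with $\ntro_0\,\overline{\phi(\es)}^{w^*}\,\ntro_0^*\subseteq \overline{\psi(\ti)}^{w^*}$ and symmetrically, so that (using Remark \ref{nondeg}, once nondegeneracy of $\ntro_0$ is checked — which follows because $\phi,\psi$ are unital) one gets $\overline{\phi(\es)}^{w^*}\sim_{TRO}\overline{\psi(\ti)}^{w^*}$.

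For the "in addition" clause: if $\phi$ is moreover a complete order embedding, then in the Stinespring picture $V^*\pi(\cdot)V$ is completely isometric, and I would argue that the induced map $\psi$ inherits complete isometry from the fact that the TRO-equivalence intertwines the two maps — specifically, $\ti=[\tro\es\tro^*]^\closure$ and $\psi(\ti)=[\ntro_0\phi(\es)\ntro_0^*]^\closure$, and a ternary/bimodule computation (using that $\psi|_{[\tro\tro^*]^\closure}$ is a faithful normal $*$-homomorphism once $\phi|_{\mathcal{B}}$ is faithful, which we may arrange) transfers injectivity and the order structure from $\phi$ to $\psi$; in particular $\psi(\ti)$ is then $w^*$-closed, being the image of a $w^*$-continuous complete isometry on a dual operator system (Krein–Smulian), so $\overline{\psi(\ti)}^{w^*}=\psi(\ti)$ and likewise $\overline{\phi(\es)}^{w^*}=\phi(\es)$, yielding $\phi(\es)\sim_{TRO}\psi(\ti)$.

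The main obstacle I anticipate is making the "transport of the TRO through $\phi$" rigorous in the operator-system setting without a genuine multiplicative structure on $\es$: one must ensure that the compression by $V$ does not destroy the ternary relations, i.e. that $V^*\ntro\ntro^*\ntro V$ still sits inside $V^*\ntro V$, which is false for arbitrary compressions and requires exploiting that $\phi|_{\mathcal{B}}$ is a genuine $*$-homomorphism (so $V$ is, in effect, reducing for the relevant von Neumann algebra action). Handling this carefully — presumably by first replacing $\phi$ by its minimal normal Stinespring dilation, checking $V^*V=I$ and that $VV^*$ commutes with $\pi(\mathcal{B})$, and only then compressing — is the delicate point; the rest is bookkeeping with $w^*$-closures of products, for which Remark \ref{nondeg} and Theorem \ref{magajna}-style normality arguments do the work.
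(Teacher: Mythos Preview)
Your plan has the right instinct---transport the TRO through the representation data of $\phi|_{\mathcal{B}}$---but it takes a detour (Stinespring dilate, push forward, then compress) that both introduces a real gap and obscures the cleaner argument the paper uses.

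The paper bypasses Stinespring entirely. Since $\mathcal{B}=[\tro^*\tro]^\closure$ acts on $H_0$ via the normal $*$-homomorphism $\phi|_{\mathcal{B}}$, and $\tro$ is a right Hilbert $\mathcal{B}$-module, one simply sets $K_0:=\tro\otimes_\phi H_0$ (the internal tensor product with inner product $\langle m_1\otimes\xi_1,m_2\otimes\xi_2\rangle=\langle\phi(m_2^*m_1)\xi_1,\xi_2\rangle$). The TRO morphism $t:\tro\to B(H_0,K_0)$, $t(m)\xi=m\otimes\xi$, comes for free and satisfies $t(m_1)^*t(m_2)=\phi(m_1^*m_2)$. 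Picking (via \cite[Lemma~8.5.23]{blecher}) a row $\en\in R_J^w(\tro)$ of partial isometries with $\en\en^*=I_K$, the map $\psi$ is then the explicit formula
\[
\psi(y)=t^{(1,J)}(\en)\,\phi^{(J)}(\en^*y\en)\,t^{(1,J)}(\en)^*.
\]
The bimodule identity $\phi(ax)=\phi(a)\phi(x)$ for $a\in\mathcal{B}$ (a consequence of $\phi|_{\mathcal{B}}$ being multiplicative) gives directly $\psi(axb^*)=t(a)\phi(x)t(b)^*$ and $\phi(a^*yb)=t(a)^*\psi(y)t(b)$, so $t(\tro)$ implements the TRO-equivalence with no compression step at all. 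Your ``linking algebra'' extension of $\phi|_{\mathcal{B}}$ is, when unpacked, exactly this internal tensor product construction; routing it through a prior Stinespring dilation of $\phi$ on all of $\es$ adds nothing and creates the compression problem you flagged.

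Two concrete gaps in your outline: first, the claim that the normal Stinespring dilation yields $\pi:\es\to B(L)$ a \emph{complete order embedding} is not justified---the dilation produces the restriction of a $*$-representation of a containing $C^*$-algebra, which is u.c.p.\ but generally not injective on $\es$. Second, in the ``in addition'' clause you want to arrange $\phi|_{\mathcal{B}}$ faithful, but that is not part of the hypothesis and cannot be assumed. The paper instead proves complete isometry of $\psi$ by a direct norm computation: using $t^{(1,J)}(\en)^*t^{(1,J)}(\en)=\phi^{(J)}(\en^*\en)$ and the fact that $\phi^{(J)}$ is isometric, one gets $\|\psi(y)\|=\|(\en^*\en)^{1/2}\en^*y\en(\en^*\en)^{1/2}\|=\|y\|$, and the same at every matrix level.
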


\begin{proof}
    We may assume that $\tro$ is $w^*$-closed (otherwise we can replace it by the TRO $\ntro=\overline{\tro}^{w^*}$ for which $\es\sim_\ntro\ti$, $[\tro^*\tro]^\closure=[\ntro^*\ntro]^\closure$ and $[\tro\tro]^\closure=[\ntro\ntro]^\closure$). First, we note that by \cite[1.3.12]{blecher} (combined with Arveson's extension Theorem), we have $\phi(ax)=\phi(a)\phi(x)$ and thus also $\phi(xa)=\phi(x)\phi(a)$ for all $a\in[\tro^*\tro]^\closure$, $x\in\es$. By \cite[Lemma 8.5.23]{blecher}, there exists a vector $\en=(n_i)_{i \in J}$ in  $R_J^w(\tro)$, consisting of partial isometries $n_i\in\tro$, such that $\sum_{i\in J}n_in_i^*$ converges to $I_K$ in the $w^*$-topology, i.e. $\en\en^*=I_K$. With $H_0$ considered as a left $[\tro^*\tro]^\closure$-module (by the action of $\phi)$, we set $K_0:=\tro\otimes_\phi H_0$ be the completion of the algebraic $[\tro^*\tro]^\closure$-balanced tensor product of $\tro$ with $H_0$ equipped with inner product
    \[
    \langle m_1\otimes\xi_1,m_2\otimes\xi_2\rangle:=\langle\phi(m_2^*m_1)\xi_1,\xi_2\rangle.
    \]As in \cite{strspaces}, we define the TRO morphism $t:\tro\to B(H_0,K_0)$ by $t(m)\xi:=m\otimes\xi$, which satisfies $t(m_1)^*t(m_2)=\phi(m_1^*m_2)$ for all $m_1,m_2\in\tro$. 

    We define $\psi:\ti\to B(K_0)$ as
    \[
    \psi(y):=t^{(1,J)}(\en)\phi^{(J)}(\en^*y\en)t^{(1,J)}(\en)^*=\sum_{i,j}t(n_i)\phi(n_i^*yn_j)t(n_j)^*
    \]It is easily seen that $\psi$ is a normal unital completely contractive (hence completely positive) map and that $\psi(m_1m_2^*)=t(m_1)t(m_2)^*$ for all $m_1,m_2\in\tro$, from which it follows that $\psi$ restricted to $[\tro\tro^*]^\closure$ is a $*$-homomorphism. Furthermore,
    \[
    \psi(axb^*)=t(a)\phi(x)t(b)^* \ \ \text{and}
    \]
    \[\phi(a^*yb)=t(a)^*\psi(y)t(b)
    \]for all $a,b\in\tro$, $x\in\es$, $y\in\ti$ and this, combined with Remark \ref{nondeg} and the fact that $t(\tro)$ acts nondegenerately, implies that $\overline{\phi(\es)}^{w^*}\sim_{TRO}\overline{\psi(\ti)}^{w^*}$ via $t(\tro)$. 

    In case $\phi$ is completely isometric, for all $y\in\ti$ we have
    \begin{gather*}
        \|\psi(y)\|=\|(t^{(1,J)}(\en)^*t^{(1,J)}(\en))^{1/2}\phi^{(J)}(\en^*y\en)(t^{(1,J)}(\en)^*t^{(1,J)}(\en))^{1/2}\| \\ =\|\phi^{(J)}(\en^*\en)^{1/2}\phi^{(J)}(\en^*y\en)\phi^{(J)}(\en^*\en)^{1/2}\|=\|(\en^*\en)^{1/2}\en^*y\en(\en^*\en)^{1/2}\|=\|y\| 
    \end{gather*}thus $\psi$ is isometric, and an identical argument proves that $\psi$ is completely isometric.
\end{proof}

We state the following Lemma, which follows essentially from the definition of the left adjointable multiplier algebra:

\begin{lemma}
\label{useful}
    Let $\es$ be an operator system and $\phi:\es\to B(H)$ a unital complete order embedding. Suppose $T\in B(H)$ satisfies $T\phi(\es)\subseteq\phi(\es)$ and $T^*\phi(\es)\subseteq\phi(\es)$. Then, $T\in \phi(\alfa_\es)$. Furthermore, the map $\phi^{-1}$ from the $C^*$-algebra ${\mathcal{A}}:=\{T\in B(H):T\phi(\es)\subseteq\phi(\es),T^*\phi(\es)\subseteq\phi(\es)\}$ into $\alfa_\es$ is a $*$-homomorphism (here $\alfa_\es$ is equipped with the product it inherits from $A_\ell(\es)$).
\end{lemma}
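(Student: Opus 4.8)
The plan is to reduce the whole statement to the $C^*$-envelope descriptions \eqref{leftm} and \eqref{adjointable} of the multiplier algebras. The first thing to record is that unitality does all the work for the inclusion $T\in\phi(\es)$: since $\phi$ is unital, $I_H=\phi(1)\in\phi(\es)$, so $T\phi(\es)\subseteq\phi(\es)$ already gives $T=T\phi(1)\in\phi(\es)$; write $T=\phi(s)$ with $s\in\es$ (unique by injectivity of $\phi$), and note $T^*=\phi(s^*)$ because a unital complete order embedding is positive, hence $*$-linear. Thus the first assertion is precisely the claim that $s\in\alfa_\es$, i.e., by \eqref{adjointable}, that $s$, regarded as an element of $C^*_e(\es)$, satisfies $s\es\subseteq\es$ and $s^*\es\subseteq\es$.

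To move from $B(H)$ to $C^*_e(\es)$ I would go through the concrete cover. The $C^*$-subalgebra $\mathcal{B}:=C^*(\phi(\es))\subseteq B(H)$ is a $C^*$-cover of $\es$ via $\phi$, so the universal property of the $C^*$-envelope (\cite{envelope}; see also \cite[Chapter 15]{paulsen}) yields a surjective $*$-homomorphism $\pi:\mathcal{B}\to C^*_e(\es)$ with $\pi\circ\phi=\mathrm{id}_\es$, identifying $\es$ with its canonical copy in $C^*_e(\es)$. All the elements that occur — $T$, $T^*$, and $T\phi(x)$, $T^*\phi(x)$ for $x\in\es$ — lie in $\mathcal{B}$. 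Fixing $x\in\es$ and writing $T\phi(x)=\phi(x')$ with $x'\in\es$, applying $\pi$ gives
\[
s\,x=\pi(\phi(s))\,\pi(\phi(x))=\pi\big(T\phi(x)\big)=\pi(\phi(x'))=x'\in\es
\]
in $C^*_e(\es)$, and the identical computation with $T^*=\phi(s^*)$ in place of $T$ gives $s^*\es\subseteq\es$. Hence $s\in\alfa_\es$ and $T=\phi(s)\in\phi(\alfa_\es)$.

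For the second assertion I would first check that $\mathcal{A}$ is a unital $C^*$-subalgebra of $B(H)$: it is a norm-closed, $*$-closed, unital subspace, and $T_1,T_2\in\mathcal{A}$ forces $T_1T_2\in\mathcal{A}$ since $T_1T_2\phi(\es)\subseteq T_1\phi(\es)\subseteq\phi(\es)$ and $(T_1T_2)^*\phi(\es)=T_2^*T_1^*\phi(\es)\subseteq\phi(\es)$. By the first part $\phi^{-1}$ maps $\mathcal{A}$ into $\alfa_\es$, and it is obviously linear. Next I would pin down the algebra structure of $\alfa_\es$: under \eqref{leftm} an element $a\in C^*_e(\es)$ with $a\es\subseteq\es$ corresponds to the multiplier $s\mapsto as$, so composition of multipliers corresponds to multiplication in $C^*_e(\es)$; hence the product and the involution that $\alfa_\es$ inherits from $A_\ell(\es)$ are simply the restrictions of those of $C^*_e(\es)$. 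Granting this, multiplicativity and $*$-preservation of $\phi^{-1}|_{\mathcal{A}}$ are again obtained by pushing forward through $\pi$: if $T_i=\phi(s_i)\in\mathcal{A}$ and $T_1T_2=\phi(s_3)$ with $s_3\in\alfa_\es$, then $s_3=\pi(\phi(s_3))=\pi(\phi(s_1)\phi(s_2))=\pi(\phi(s_1))\pi(\phi(s_2))=s_1s_2$ in $C^*_e(\es)$, while $\phi^{-1}(T^*)=\phi^{-1}(\phi(s^*))=s^*$; so $\phi^{-1}|_{\mathcal{A}}$ is a $*$-homomorphism into $\alfa_\es$.

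The only genuinely subtle point is the interaction of the two ambient $C^*$-algebras: products in $B(H)$ and products in $C^*_e(\es)$ need not match up, since $\pi$ is usually far from injective. This is handled by never needing that matching in the wrong direction — every identity is established in $\mathcal{B}\subseteq B(H)$ and then transported into $C^*_e(\es)$ along the $*$-homomorphism $\pi$, which is exactly why the cover $C^*(\phi(\es))$ together with its co-universal map, and not $B(H)$ itself, is the right ambient object. (In particular $\phi^{-1}|_{\mathcal{A}}$ need not be surjective onto $\alfa_\es$, and the statement only asserts that it lands in $\alfa_\es$.)
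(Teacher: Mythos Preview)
Your argument is correct, but it takes a different route from the paper's. The paper works entirely on the abstract multiplier side: it defines $\alpha_T:\es\to\es$ by $\alpha_T(s)=\phi^{-1}(T\phi(s))$, observes (via the abstract characterisation of adjointable multipliers in \cite[Section~4.5]{blecher}) that the hypotheses on $T$ and $T^*$ force $\alpha_T\in A_\ell(\es)$, and then reads off $\phi^{-1}(T)=\alpha_T(1)\in\alfa_\es$ from the definition of $\iota_\es$; the second assertion is the remark that $T\mapsto\alpha_T$ is a $*$-homomorphism into $A_\ell(\es)$. You instead stay on the concrete $C^*$-envelope side: you identify $s=\phi^{-1}(T)$ inside $C^*_e(\es)$ and use the co-universal map $\pi:C^*(\phi(\es))\to C^*_e(\es)$ to transport the inclusions $T\phi(\es)\subseteq\phi(\es)$, $T^*\phi(\es)\subseteq\phi(\es)$ into $s\es\subseteq\es$, $s^*\es\subseteq\es$, then invoke \eqref{adjointable}. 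Both are sound; the paper's is shorter because it hides the envelope entirely behind the word ``adjointable,'' while yours makes explicit the one nontrivial step (that multiplication of $\phi(s)$ and $\phi(x)$ in $B(H)$ really does descend to the $C^*_e(\es)$ product $sx$), which is exactly what $\pi$ guarantees. Your closing paragraph correctly flags that this is the only place care is needed and that the argument only ever moves in the direction $\pi$ allows.
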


\begin{proof}
    We define the map $\alpha_T:\es\to\es$, $\alpha_T(s):=\phi^{-1}(T\phi(s))$. Clearly, $\alpha_T\in A_\ell(\es)$. Hence, by definition of $\alfa_\es$, $\alpha_T(1)=\phi^{-1}(T)\in \alfa_\es$ or $T\in \phi(\alfa_\es)$. The second statement follows from the fact that the map $T\mapsto \alpha_T$ is a $*$-homomorphism.
\end{proof}

\begin{theorem}
\label{delta}
    $\Delta$-equivalence is an equivalence relation.
\end{theorem}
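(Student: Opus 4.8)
The plan is to verify the three defining properties of an equivalence relation for $\Delta$-equivalence of dual operator systems, leaning on TRO-equivalence being an equivalence relation (Theorem \ref{troeq}) and on the ``transport'' Lemma \ref{trom}. Reflexivity is immediate: given a dual operator system $\es$, Remark \ref{rep2} (or \cite[Theorem 1.1]{dualopsys}) supplies a normal unital complete order embedding $\phi:\es\to B(H)$, and $\phi(\es)\sim_{TRO}\phi(\es)$ via the TRO $\tro=\mathbb{C}I_H$, so $\es$ is $\Delta$-equivalent to itself. Symmetry is also essentially formal: if $\phi:\es\to B(H)$ and $\psi:\ti\to B(K)$ are normal unital complete order embeddings with $\phi(\es)\sim_\tro\psi(\ti)$, then $\tro^*\subseteq B(K,H)$ is a TRO implementing $\psi(\ti)\sim_{\tro^*}\phi(\es)$, using the defining identities $[\tro\es\tro^*]^\closure = \ti$, $[\tro^*\ti\tro]^\closure = \es$ read in the opposite order.

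The substantive part is transitivity. Suppose $\es$ is $\Delta$-equivalent to $\ti$ and $\ti$ is $\Delta$-equivalent to $\mathcal{U}$. Then there are normal unital complete order embeddings $\phi_1:\es\to B(H_1)$, $\psi_1:\ti\to B(K_1)$ with $\phi_1(\es)\sim_{\tro_1}\psi_1(\ti)$, and normal unital complete order embeddings $\psi_2:\ti\to B(K_2)$, $\rho_2:\mathcal{U}\to B(L_2)$ with $\psi_2(\ti)\sim_{\tro_2}\rho_2(\mathcal{U})$. The obstruction is that the two realizations $\psi_1(\ti)$ and $\psi_2(\ti)$ of the middle system $\ti$ are a priori unrelated as concrete operator systems, so one cannot directly compose the TROs $\tro_1$ and $\tro_2$. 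The remedy, following the strategy of \cite{strspaces}, is to use Lemma \ref{trom} to move the second equivalence onto the realization $\psi_1(\ti)$. Concretely, one wants to apply Lemma \ref{trom} with the roles of $(\es,\ti,\tro)$ played by $(\psi_2(\ti),\rho_2(\mathcal{U}),\tro_2)$ and with the map $\phi$ there taken to be (a version of) $\psi_1\circ\psi_2^{-1}$, a normal unital complete order embedding of $\psi_2(\ti)$ into $B(K_1)$. To legitimately invoke the lemma we must check its hypothesis that $\phi$ restricted to $[\tro_2^*\tro_2]^\closure$ is a $*$-homomorphism: here Lemma \ref{useful} (together with Theorem \ref{magajna} and Remark \ref{rep2}) is the right tool, since $[\tro_2^*\tro_2]^\closure$ sits inside $A_\ell(\psi_2(\ti))$-type multipliers of $\psi_2(\ti)$ and such multipliers are carried to multipliers of $\psi_1(\ti)$ by the complete order isomorphism $\psi_1\circ\psi_2^{-1}$, on which it automatically acts multiplicatively. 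Lemma \ref{trom} then yields a Hilbert space $L$, a normal unital complete order embedding $\rho:\mathcal{U}\to B(L)$, and a TRO $\ntro\subseteq B(L, K_1)$ with $\psi_1(\ti) = \overline{(\psi_1\psi_2^{-1})(\psi_2(\ti))}^{w^*}\sim_\ntro \rho(\mathcal{U})$ — note the range is genuinely $w^*$-closed and equals $\psi_1(\ti)$ because $\psi_1\psi_2^{-1}$ is a complete isometry, which is exactly the ``in addition'' clause of Lemma \ref{trom}.

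Having arranged $\phi_1(\es)\sim_{\tro_1}\psi_1(\ti)$ and $\psi_1(\ti)\sim_\ntro \rho(\mathcal{U})$ inside the \emph{same} $B(K_1)$ for the middle term, transitivity of TRO-equivalence (Theorem \ref{troeq}) gives $\phi_1(\es)\sim_{TRO}\rho(\mathcal{U})$; concretely one checks $[\ntro\tro_1]^\closure$ works as the composite TRO. Since $\phi_1$ and $\rho$ are normal unital complete order embeddings, this is precisely a witness that $\es$ is $\Delta$-equivalent to $\mathcal{U}$, completing the proof. The only genuinely delicate point is the verification that $\psi_1\circ\psi_2^{-1}$ satisfies the $*$-homomorphism hypothesis of Lemma \ref{trom} on $[\tro_2^*\tro_2]^\closure$; everything else is bookkeeping with the definitions and the cited transport results.
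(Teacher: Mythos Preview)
Your overall architecture is right—use Lemma \ref{trom} to transport one of the two TRO-equivalences so that the two concrete copies of the middle system $\ti$ coincide, and then invoke Theorem \ref{troeq}. The gap is exactly at the point you yourself flag as ``the only genuinely delicate point'': the claim that $\psi_1\circ\psi_2^{-1}$ restricted to $[\tro_2^*\tro_2]^\closure$ is a $*$-homomorphism. Lemma \ref{useful} does give that $\psi_2^{-1}$ carries $[\tro_2^*\tro_2]^\closure$ into $\alfa_\ti$ as a $*$-homomorphism, but you then need $\psi_1|_{\alfa_\ti}$ to be a $*$-homomorphism into $B(K_1)$, and an arbitrary normal unital complete order embedding has no reason to be multiplicative on $\alfa_\ti$. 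A concrete obstruction: take $\ti=\C\oplus\C$ (so $\alfa_\ti=\ti$) and $\psi_1(a,b)=\mathrm{diag}(a,b,(a+b)/2)\in M_3$; this is a normal unital complete order embedding but $\psi_1((1,0))^2\neq\psi_1((1,0)^2)$. So ``multipliers are carried to multipliers \ldots on which it automatically acts multiplicatively'' is not true at the level of concrete operators in $B(K_1)$.

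The paper's proof fixes precisely this by not trying to push the second equivalence onto $\psi_1(\ti)$. Instead it introduces a \emph{third} representation $\gamma:\ti\to B(H_0)$, supplied by Theorem \ref{rep}/Remark \ref{rep2}, whose defining feature is that $\gamma|_{\alfa_\ti}$ \emph{is} a $*$-homomorphism. Then Lemma \ref{useful} plus Lemma \ref{trom} are applied twice, once to $\gamma\circ\psi_1^{-1}$ and once to $\gamma\circ\psi_2^{-1}$, transporting \emph{both} TRO-equivalences to the common anchor $\gamma(\ti)$; transitivity of TRO-equivalence finishes. You cite Remark \ref{rep2} among your tools but never actually deploy $\gamma$—that is the missing step.
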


\begin{proof}
    Reflexivity and symmetricity are obvious. For transitivity, let $\es_1\sim_\Delta\es_2$ and $\es_2\sim_\Delta \es_3$. There exist normal unital complete order embeddings $\phi:\es_1\to B(K)$, $\psi_1:\es_2\to B(H_1)$, $\psi_2:\es_2\to B(H_2)$, $\omega:\es_3\to B(L)$ such that $\phi(\es_1)\sim_{TRO}\psi_1(\es_2)$ and $\psi_2(\es_2)\sim_{TRO}\omega(\es_3)$. Let $\gamma:\es_2\to B(H_0)$ be a normal representation as in Remark \ref{rep2} (i.e. $\gamma$ is a $*$-homomorphism when restricted to $\alfa_{\es_2}$). Suppose $\phi(\es_1)\sim_\tro\psi_1(\es_2)$. Then, $\psi_1(\es_2)$ is a $[\tro\tro^*]^\closure$-bimodule, so by Lemma \ref{useful}, $[\tro\tro^*]^\closure\subseteq \psi_1(\alfa_{\es_2})$ and the restriction $\psi_1^{-1}:[\tro\tro^*]^\closure\to \alfa_{\es_2}$ is a $*$-homomorphism, thus $\gamma\circ \psi_1^{-1}$ restricted to $[\tro\tro^*]^\closure$ is a $*$-homomorphism too. By Lemma \ref{trom}, there exist a Hilbert space $K_0$ and a normal unital complete order embedding $\alpha:\phi(\es_1)\to B(K_0)$ such that
    \[
    \alpha(\phi(\es_1))\sim_{TRO}\gamma\circ\psi_1^{-1}(\psi_1(\es_2))=\gamma(\es_2).
    \]
    
    Using the same argument applied to the equivalence $\psi_2(\es_2)\sim_{TRO}\omega(\es_3)$, we get a normal unital complete order embedding $\beta:\omega(\es_3)\to B(L_0)$ such that
    \[
    \gamma(\es_2)\sim_{TRO}\beta(\omega(\es_3))
    \]thus, by Theorem \ref{troeq}, $\alpha(\phi(\es_1))\sim_{TRO}\beta(\omega(\es_3))$, or $\es_1\sim_\Delta\es_3$.
\end{proof}
\begin{remark}
\label{bimap}
    The proof of Theorem \ref{delta} yields a result, which will be useful later: if $\es\sim_\Delta\ti$, there exist normal unital complete order embeddings $\Phi:\es\to B(H)$ and $\psi:\ti\to B(K)$ so that $\Phi|_{\alfa_\es}$ is a $*$-homomorphism and $\Phi(\es)\sim_{TRO}\psi(\ti)$.
\end{remark}

\section{Stable isomorphism} \label{sec:weakstable}

\subsection{Dual operator spaces}

We recall the following from \cite{stable}:
\begin{definition}
    \begin{itemize}
        \item[(i)] Two $w^*$-closed subspaces $\spacex\subseteq B(H_1,H_2),\spacey\subseteq B(K_1,K_2)$ are called \textit{TRO-equivalent} if there exist TRO's $\tro_1\subseteq B(H_1,K_1)$ and $\tro_2\subseteq B(H_2,K_2)$ with $\spacey=[\tro_2\spacex\tro_1^*]^\closure$ and $\spacex=[\tro_2^*\spacey\tro_1]^\closure$. As before, we write $\spacex\sim_{TRO}\spacey$.
        \item[(ii)] Two (abstract) dual operator spaces $\spacex,\spacey$ are called \textit{$\Delta$-equivalent} if there exist normal completely isometric representations $\phi:\spacex\to B(H_1,H_2)$ and $\psi:\spacey\to B(K_1,K_2)$ such that $\phi(\spacex)\sim_{TRO}\psi(\spacey)$. In that case, we write $\spacex\sim_\Delta\spacey$.
        \item[(iii)] Two dual operator spaces $\spacex,\spacey$ are called \textit{stably isomorphic} if there exists a cardinal $J$ such that $\spacex\nsp M_J\cong \spacey\nsp M_J$ completely isometrically and $w^*$-homeomorphically. 
    \end{itemize}
\end{definition}

\begin{theorem}{\cite[Theorem 2.5]{stable}}
\label{paulseneq}
    Two dual operator spaces are $\Delta$-equivalent if and only if they are stably isomorphic.
\end{theorem}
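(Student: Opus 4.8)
The plan is to prove the two implications separately, the substantial one being that $\Delta$-equivalence forces stable isomorphism.

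For the easy direction I would first record that every dual operator space $\spacex$ is $\Delta$-equivalent to $\spacex\nsp M_J$ for any cardinal $J$: realizing $\spacex$ $w^*$-homeomorphically and completely isometrically inside some $B(H_1,H_2)$, the $w^*$-closed spaces $\tro_i:=I_{H_i}\nsp M_{J,1}\subseteq B(H_i,H_i\otimes\ell_J^2)$ are TRO's for which $[\tro_i\tro_i^*]^\closure$ and $[\tro_i^*\tro_i]^\closure$ are unital, and a direct computation gives $[\tro_2\spacex\tro_1^*]^\closure=\spacex\nsp M_J$ and $[\tro_2^*(\spacex\nsp M_J)\tro_1]^\closure=\spacex$, so $\spacex\sim_{TRO}\spacex\nsp M_J$. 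Since, moreover, any two completely isometrically and $w^*$-homeomorphically isomorphic dual operator spaces are trivially $\Delta$-equivalent (use the TRO $\C I$ in a common realization), a stable isomorphism $\spacex\nsp M_J\cong\spacey\nsp M_J$ yields $\spacex\sim_\Delta\spacex\nsp M_J\sim_\Delta\spacey\nsp M_J\sim_\Delta\spacey$, and transitivity of $\Delta$-equivalence for dual operator spaces (proved exactly as Theorem \ref{delta}, cf. \cite{strspaces}) finishes this implication.

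For the converse, after passing to concrete realizations I may assume $\spacex\subseteq B(H_1,H_2)$ and $\spacey\subseteq B(K_1,K_2)$ are $w^*$-closed with $\spacey=[\tro_2\spacex\tro_1^*]^\closure$ and $\spacex=[\tro_2^*\spacey\tro_1]^\closure$ for $w^*$-closed TRO's $\tro_i\subseteq B(H_i,K_i)$; compressing $H_i$ and $K_i$ to the closed linear spans of $\tro_i^*K_i$ and $\tro_iH_i$ respectively, I may further assume each $\tro_i$ acts nondegenerately, so that $\mathcal{P}_i:=[\tro_i\tro_i^*]^\closure\subseteq B(K_i)$ and $\ntro_i:=[\tro_i^*\tro_i]^\closure\subseteq B(H_i)$ are \emph{unital} von Neumann algebras. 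The crux is the claim: for any infinite cardinal $J$ the $w^*$-closed TRO $\tro_i\nsp M_J\subseteq B(H_i\otimes\ell_J^2,K_i\otimes\ell_J^2)$ contains a unitary operator $U_i$. To prove it I would form the linking von Neumann algebra $\mathcal{L}_i=\left(\begin{smallmatrix}\mathcal{P}_i & \tro_i\\ \tro_i^* & \ntro_i\end{smallmatrix}\right)\subseteq B(K_i\oplus H_i)$, with corner projections $p_i=I_{K_i}\oplus 0$ and $q_i=0\oplus I_{H_i}$, so that $\tro_i=p_i\mathcal{L}_iq_i$, $\mathcal{P}_i=p_i\mathcal{L}_ip_i$, $\ntro_i=q_i\mathcal{L}_iq_i$; from $[\tro_i\tro_i^*]^\closure=\mathcal{P}_i$, $[\tro_i^*\tro_i]^\closure=\ntro_i$ one checks (for $m\in\tro_i$ one has $z(q_i)m=m$, whence $z(q_i)^\perp\tro_i\tro_i^*=0$ and $p_i\le z(q_i)$; symmetrically, and using $p_i+q_i=I$) that $p_i$ and $q_i$ have the same central support $I_{\mathcal{L}_i}$. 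Tensoring with $M_J\cong B(\ell_J^2)$, the corner projections $p_i\otimes I$ and $q_i\otimes I$ of $\mathcal{L}_i\nsp M_J$ become \emph{properly infinite} while keeping equal (full) central support, so by the comparison theory of von Neumann algebras they are Murray--von Neumann equivalent; a partial isometry implementing this equivalence lies in $(p_i\otimes I)(\mathcal{L}_i\nsp M_J)(q_i\otimes I)=\tro_i\nsp M_J$ and, read as a map $H_i\otimes\ell_J^2\to K_i\otimes\ell_J^2$, is the desired unitary $U_i$. Fixing one such $J$ for both $i=1,2$ is harmless.

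Granted the claim, I would conclude by tensoring the original equivalence with $M_J$: from $\spacey\nsp M_J=[(\tro_2\nsp M_J)(\spacex\nsp M_J)(\tro_1\nsp M_J)^*]^\closure$ and the analogous identity for $\spacex\nsp M_J$, together with $U_i\in\tro_i\nsp M_J$, one gets $U_2(\spacex\nsp M_J)U_1^*\subseteq\spacey\nsp M_J$ and $U_2^*(\spacey\nsp M_J)U_1\subseteq\spacex\nsp M_J$, hence $U_2(\spacex\nsp M_J)U_1^*=\spacey\nsp M_J$; the map $x\mapsto U_2xU_1^*$ is then a complete isometry and a $w^*$-homeomorphism of $\spacex\nsp M_J$ onto $\spacey\nsp M_J$, so $\spacex$ and $\spacey$ are stably isomorphic. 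The main obstacle I expect is the claim: one must handle the nondegeneracy reduction carefully so that $\mathcal{P}_i,\ntro_i$ are genuinely unital, verify that fullness of $\tro_i$ yields full central support of the corner projections, and then correctly invoke — after tensoring with $M_J$ to make those projections properly infinite — the von Neumann algebra fact that properly infinite projections with equal central support are equivalent; after the claim, everything (including the first implication) is essentially formal.
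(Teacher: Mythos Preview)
Your argument is correct. The ``if'' direction matches the paper's exactly. For the ``only if'' direction your route is genuinely different from the paper's: you pass to the linking von Neumann algebra of each $\tro_i$, observe that nondegeneracy of $\tro_i$ forces the corner projections $p_i,q_i$ to have full central support, amplify by $M_J$ to make $p_i\otimes I$ and $q_i\otimes I$ properly infinite, and then invoke the comparison theorem to obtain a Murray--von Neumann equivalence, i.e.\ a unitary $U_i\in(p_i\otimes I)(\mathcal{L}_i\nsp M_J)(q_i\otimes I)=\tro_i\nsp M_J$. The paper instead proves the stronger Theorem~\ref{uni} by an explicit Eilenberg-swindle construction: from the rows/columns of partial isometries $\emm_i,\en_i$ in $\tro_i$ (supplied by \cite[Lemma~8.5.23]{blecher}) it manufactures $U_i=S_i+T_i$ concretely using the one-sided shifts $W_L,W_R$ on $\ell^2$.

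What each buys: your argument is shorter and more conceptual, and it yields the pleasant extra information that the unitaries actually lie in $\tro_i\nsp M_J$; on the other hand it leans on the black box of comparison theory in von Neumann algebras, which has no direct analogue for norm-closed TRO's. The paper's explicit construction, by contrast, is what allows the same formulae \eqref{ff}--\eqref{ff2} to be reused verbatim in Section~\ref{sec:strongstable} to handle the strong (norm-closed, $\sigma$-TRO) case. One minor point: your nondegeneracy reduction changes the ambient Hilbert spaces, so you are proving stable isomorphism of $\spacex,\spacey$ rather than Theorem~\ref{uni} on the nose for the original $H_i,K_i$; since Theorem~\ref{paulseneq} is an abstract statement this is harmless, but it is worth saying explicitly.
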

The "if" part follows directly from the fact that $\Delta$-equivalence is an equivalence relation, coarser than normal completely isometric isomorphism, and $A\sim_\Delta A\nsp M_J$ for all cardinals $J$ and all operator spaces $A$, which is easy to see.

The "only if" part requires a fair amount of work, and the proof relies on the analogous result about dual operator algebras \cite[Theorem 3.2]{stalgebras}. As we shall see, Theorem \ref{paulseneq} follows from a much stronger result (Theorem \ref{uni} below). 

If $\spacex\sim_{TRO}\spacey$ via $w^*$-closed TRO's $\tro_1,\tro_2$ as above, and $\spacex,\spacey$ act nondegenerately, then $\tro_1,\tro_2$ act nondegenerately too, therefore by \cite[Lemma 8.5.23]{blecher} there exists a cardinal $J$ (which we can assume to be $\geq |\mathbb{N}|$) and vectors $\emm_1=(m_{1,i})_{i\in J}\in \tro_1\nsp C_J$, $\emm_2=(m_{2,i})_{i\in J}\in\tro_2\nsp C_J$, $\en_1=(n_{1,i})_{i\in J}\in \tro_1\nsp R_J$ and $\en_2=(n_{2,i})_{i\in J}\in\tro_2\nsp R_J$ such that $\emm_i^*\emm_i=I_{H_i}$ and $\en_i\en_i^*=I_{K_i}$ for $i=1,2$. We define $$p_i:=\emm_i\emm_i^*\in [\tro_i\tro_i^*]^\closure\nsp M_J, \ i=1,2$$ which are projections in $B(K_i\otimes \ell_J^2)$ and $$q_i:=\en_i^*\en_i\in[\tro_i^*\tro_i]^\closure\nsp M_J, \ i=1,2$$ which are projections in $B(H_i\otimes \ell_J^2)$. 

Using a well-known trick (see e.g. the proofs of \cite[Theorem 8.5.28]{blecher}, \cite[Lemma 3.5]{ele}, \cite[Theorem 4.6]{strspaces} and \cite[Lemma 3.1]{stalgebras}), one can show that $\spacex \nsp M_J\cong \spacey \nsp M_J$ completely isometrically. As it turns out, this complete isometry is given by unitary operators:
\begin{theorem}
\label{uni}
    Suppose that $\spacex\subseteq B(H_1,H_2)$, $\spacey\subseteq B(K_1,K_2)$ are $w^*$-closed subspaces acting nondegenerately with $\spacex\sim_{TRO}\spacey$. Then, there exists a cardinal $J$ and unitary operators $U_i\in B(H_i\otimes \ell_J^2,K_i\otimes \ell_J^2)$, $i=1,2$ such that
    \[
    \spacey\nsp M_J=U_2(\spacex \nsp M_J)U_1^*.
    \]In particular, $X$ and $Y$ are stably isomorphic.
\end{theorem}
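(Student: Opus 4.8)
The plan is to make the ``well-known trick'' explicit and verify that the resulting complete isometry is implemented by unitaries. The key observation is that the partial isometry data $\emm_i, \en_i$ supplied by \cite[Lemma 8.5.23]{blecher} gives us more than just $\emm_i^*\emm_i = I_{H_i}$ and $\en_i\en_i^* = I_{K_i}$: the elements $\emm_i\emm_i^*$ and $\en_i^*\en_i$ are projections, and by enlarging $J$ (replacing $J$ by $J \sqcup J$, say) we can arrange to have, simultaneously for $i=1,2$, both a column vector witnessing $[\tro_i^*\tro_i]^{\closure}$-nondegeneracy and a row vector witnessing $[\tro_i\tro_i^*]^{\closure}$-nondegeneracy. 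The point of the trick is to build from $\emm_2, \en_1$ an honest unitary from $H_1 \otimes \ell_J^2$ onto $K_2 \otimes \ell_J^2$ up to compression, and similarly on the other side.

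Concretely, I would proceed as follows. First, set $U_i := \emm_i \en_i^{(J)} \in B(H_i \otimes \ell_J^2, K_i \otimes \ell_J^2)$ (more precisely the appropriate matrix product, using that $\emm_i \in \tro_i \nsp C_J$ and $\en_i \in \tro_i \nsp R_J$ so that $\emm_i \en_i$ lives in $[\tro_i \tro_i^*]^{\closure} \nsp M_J \subseteq B(K_i \otimes \ell_J^2)$ — but one actually wants $\emm_i$ acting on $H_i \otimes \ell_J^2$ via its ``row reshaping'' and $\en_i$ on the other side; the correct formula is $U_i = (\emm_i)_{\mathrm{row}} \cdot (\en_i)_{\mathrm{col}}$ so that $U_i^* U_i = \en_i^* \emm_i^* \emm_i \en_i^{(J)} = \en_i^* \en_i^{(J)} = q_i^{(J)}$ and $U_i U_i^* = \emm_i \en_i \en_i^* \emm_i^* = \emm_i \emm_i^{*(J)} = p_i^{(J)}$). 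Since $q_i$ and $p_i$ are proper projections in general, this only gives partial isometries with those projections as support and range. The standard fix: absorb the complements by passing to $\ell_J^2$ with $J$ infinite (so $J \cong J \sqcup J \cong \mathbb{N}\times J$) and use that a projection $p$ in $B(\Hilb) \bar\otimes M_J$ with $p \sim I$ — which holds since $\emm_i\emm_i^* $ is the range projection of an isometry $\ell_J^2 \otimes H_i \to \ell_J^2 \otimes K_i$ in the first case — can be conjugated to $I$ by a unitary. Here is where I would lean on Lemma \ref{useful}'s spirit and on the explicit coordinates rather than abstract $K$-theory: the vectors $\emm_i$ already exhibit $I_{K_i \otimes \ell_J^2}$ and $p_i$ as Murray–von Neumann equivalent within $[\tro_i\tro_i^*]^{\closure}\nsp M_J$ via $\emm_i \emm_i^{*(J)}$ arguments combined with a shift on the index set.

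Once the unitaries $U_i : H_i \otimes \ell_J^2 \to K_i \otimes \ell_J^2$ are in hand, the remaining step is the identity $\spacey \nsp M_J = U_2 (\spacex \nsp M_J) U_1^*$. For one inclusion: $U_2 (x \otimes a) U_1^*$ for $x \in \spacex$, $a \in M_J$ unwinds, using $\spacey = [\tro_2 \spacex \tro_1^*]^{\closure}$ and the matrix shape of $\emm_2, \en_1$, to a $w^*$-limit of finite sums $\sum m_{2,i} x n_{1,j}^* \otimes (\text{scalar matrices})$, each term lying in $[\tro_2 \spacex \tro_1^*]^{\closure} \nsp M_J = \spacey \nsp M_J$; since the latter is $w^*$-closed and $U_2(\cdot)U_1^*$ is $w^*$-continuous, we get $U_2(\spacex \nsp M_J)U_1^* \subseteq \spacey \nsp M_J$. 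The reverse inclusion is symmetric using $\spacex = [\tro_2^* \spacey \tro_1]^{\closure}$ and $U_i^*$, together with $U_i U_i^* = I$. Equality of the two $w^*$-closed spaces then follows, and $U_2(\cdot)U_1^*$ is manifestly a complete isometry and $w^*$-homeomorphism, giving the final ``in particular'' statement.

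I expect the main obstacle to be the bookkeeping that upgrades the partial isometries $\emm_i \en_i$ to genuine unitaries without losing compatibility with the module structure — i.e. ensuring the conjugating unitary that kills the complements of $p_i$ and $q_i$ can be taken inside the relevant von Neumann algebra $[\tro_i\tro_i^*]^{\closure}\nsp M_J$ (resp. $[\tro_i^*\tro_i]^{\closure}\nsp M_J$) so that it does not disturb the bimodule identities $\spacey = [\tro_2\spacex\tro_1^*]^{\closure}$. This is precisely why one enlarges $J$ and why \cite[Lemma 8.5.23]{blecher} is invoked in the ``both column and row'' form; getting the index-set arithmetic and the two-sided (left and right TRO) versions to line up simultaneously for $i=1$ and $i=2$ is the delicate part, whereas the tensor-product manipulations and the $w^*$-continuity arguments are routine.
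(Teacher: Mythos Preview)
Your plan correctly identifies the overall architecture---pull the column/row data $\emm_i,\en_i$ from \cite[Lemma 8.5.23]{blecher}, manufacture unitaries $U_i$, then verify $U_2(\spacex\nsp M_J)U_1^*=\spacey\nsp M_J$ by $w^*$-continuity and the bimodule relations---and your sketch of that last verification is fine. But the step you yourself flag as ``the main obstacle'' is not just bookkeeping: it is the entire content of the theorem, and your proposal does not carry it out.

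Two concrete problems. First, your tentative formula $U_i=\emm_i\en_i^{(J)}$ (in any of the reshapings you describe) yields at best a partial isometry with initial projection $q_i$ and final projection $p_i$; you acknowledge this. Second, your proposed repair is mis-stated: a projection $p$ with $p\sim I$ in a von Neumann algebra \emph{cannot} be ``conjugated to $I$ by a unitary'' unless $p=I$ already, and the witness you cite for $p_i\sim I$ (namely $\emm_i$) is an isometry $H_i\to K_i\otimes\ell_J^2$, not an element of $[\tro_i\tro_i^*]^{\closure}\nsp M_J$, so it does not exhibit a Murray--von Neumann equivalence in that algebra. What one actually needs is to combine the isometry coming from $\emm_i$ with the coisometry coming from $\en_i$ into a single unitary, while keeping all pieces inside $\tro_i$-built objects so that the bimodule identity survives. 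The paper does exactly this with an explicit Hilbert-hotel construction: using a unitary $V:\ell_J^2\to\ell_J^2\otimes\ell^2\otimes\ell_J^2$ and the shift operators $W_L,W_R$ on $\ell^2$, it sets $U_i=S_i+T_i$ with
\[
S_i=(I\otimes V^*)(\emm_i\otimes e_1\otimes V^*)\bigl((q_i\otimes W_L+q_i^\perp\otimes I)\otimes I\bigr)(I\otimes V),
\]
\[
T_i=(I\otimes V^*)\bigl((p_i\otimes W_R+p_i^\perp\otimes I)\otimes I\bigr)(\en_i\otimes e_1^*\otimes V)(I\otimes V),
\]
and then checks $S_iS_i^*+T_iT_i^*=I$, $S_i^*S_i+T_i^*T_i=I$, and that all cross terms vanish. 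The shifts are what absorb the complements $p_i^\perp,q_i^\perp$; this is the piece your outline gestures at (``a shift on the index set'') but never writes down.
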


\begin{proof}
    Let $J, \tro_1,\tro_2,\emm_1,\emm_2,\en_1,\en_2$ be as above. Since $|J|\geq |\mathbb{N}|$, there is a unitary $V\in B(\ell_J^2,\ell_J^2\otimes\ell^2\otimes\ell_J^2)$.

    For $i=1,2$, we define $U_i:=S_i+T_i$, where
    \begin{align} \label{ff}
    S_i:=(I_{K_i}\otimes V^*)(\emm_i\otimes e_1\otimes V^*) ((q_i\otimes W_L+q_i^\perp\otimes I_{\ell^2})\otimes I_{\ell_J^2})(I_{H_i}\otimes V)
    \\ \label{ff2}
    T_i:=(I_{K_i}\otimes V^*)((p_i\otimes W_R+p_i^\perp\otimes I_{\ell^2})\otimes I_{\ell_J^2})(\en_i\otimes e_1^*\otimes V)(I_{H_i}\otimes V)
    \end{align}
    where $W_L,W_R\in B(\ell^2)$ are the left- and right-shift operators respectively. Direct computations show that
    \begin{itemize}
        \item $S_iS^*_i+T_iT_i^*=I_{K_i}\otimes I_{\ell_J^2}$, $S_i^*S_i+T_i^*T_i=I_{H_i}\otimes I_{\ell_J^2}$ and
        \item $S_iT_i^*$, $S_i^*T_i$, $T_iS_i^*$, $T_i^*S_i$ are all zero,
    \end{itemize}
    therefore $U_1,U_2$ are unitary operators and the fact $\spacey \nsp M_J=U_2(\spacex \nsp M_J)U_1^*$ is easily seen.
\end{proof}

We obtain a bunch of immediate corollaries:

\begin{corollary} \label{corollary:algebras}
    Suppose that $A\subseteq B(H)$, $B\subseteq B(K)$ are unital $w^*$-closed subalgebras which are TRO-equivalent, in the sense that there exists a TRO $\tro\subseteq B(H,K)$ with $A=[\tro^*B\tro]^\closure$ and $B=[\tro A\tro^*]^\closure$. Then there exists a cardinal $J$ and a unitary $U\in B(H\otimes\ell^2,K\otimes\ell^2)$ such that $B\nsp M_J=U(A\nsp M_J)U^*$.
\end{corollary}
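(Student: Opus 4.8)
The plan is to deduce this from Theorem \ref{uni}, by observing that the present hypotheses are precisely the special case of weak TRO-equivalence of operator spaces in which the two pairs of Hilbert spaces and the two implementing TRO's all coincide, and that in this situation the two unitaries furnished by the proof of Theorem \ref{uni} can be arranged to be equal.

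First I would record that, $A$ and $B$ being unital, they act nondegenerately on $H$ and $K$, and that, arguing as in the proof of Lemma \ref{trom}, we may assume $\tro$ is $w^*$-closed. Setting $H_1=H_2=H$, $K_1=K_2=K$, $\tro_1=\tro_2=\tro$, $\spacex=A$ and $\spacey=B$, the relations $B=[\tro A\tro^*]^\closure$ and $A=[\tro^*B\tro]^\closure$ say exactly that $\spacex\sim_{TRO}\spacey$ in the sense of the definition preceding Theorem \ref{paulseneq}. Then \cite[Lemma 8.5.23]{blecher}, applied to the single TRO $\tro$, yields one cardinal $J\geq|\mathbb{N}|$ together with vectors $\emm\in\tro\nsp C_J$ and $\en\in\tro\nsp R_J$ satisfying $\emm^*\emm=I_H$ and $\en\en^*=I_K$; since $H_1=H_2$, $K_1=K_2$ and $\tro_1=\tro_2$, we are free to make the choices $\emm_1=\emm_2=\emm$ and $\en_1=\en_2=\en$ in the paragraph preceding Theorem \ref{uni}, whence also $p_1=p_2=\emm\emm^*$ and $q_1=q_2=\en^*\en$.

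With these choices in place, formulas \eqref{ff} and \eqref{ff2} show that $S_i$ and $T_i$ depend on the index $i$ only through $(H_i,K_i,\emm_i,\en_i,p_i,q_i)$, the remaining ingredients $V$, $W_L$, $W_R$ being fixed; hence $S_1=S_2$ and $T_1=T_2$, so the two unitaries produced by Theorem \ref{uni} coincide, $U_1=U_2=:U$. The conclusion of Theorem \ref{uni} then reads $B\nsp M_J=\spacey\nsp M_J=U_2(\spacex\nsp M_J)U_1^*=U(A\nsp M_J)U^*$, which is the desired statement. I do not anticipate a genuine difficulty here; the only point requiring attention is verifying that the construction of the $U_i$ in the proof of Theorem \ref{uni} is indeed a function of the input data $(H_i,K_i,\tro_i)$ alone (equivalently, of $(\emm_i,\en_i)$ alone), so that identical input data force identical output unitaries, and this is transparent from the explicit formulas for $S_i$ and $T_i$.
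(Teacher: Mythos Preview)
Your proposal is correct and follows exactly the paper's own approach: the paper's proof is the single sentence ``We apply the proof of Theorem \ref{uni} with $U=U_1=U_2$,'' and you have spelled out precisely why this is legitimate, namely that with the symmetric choices $\tro_1=\tro_2=\tro$, $\emm_1=\emm_2$, $\en_1=\en_2$ the formulas \eqref{ff}--\eqref{ff2} force $U_1=U_2$.
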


\begin{proof}
    We apply the proof of Theorem \ref{uni} with $U=U_1=U_2$.
\end{proof}

\begin{corollary}
    Suppose $\mathcal{A}, B$ are von Neumann algebras acting on $H,K$ respectively and that $\mathcal{A}',B'$ are $*$-isomorphic. Then, for some cardinal $J$, ${\mathcal{A}}\nsp M_J$ and $\mathcal{B}\nsp M_J$ are unitarily equivalent.  
\end{corollary}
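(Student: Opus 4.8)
The plan is to derive this from Corollary \ref{corollary:algebras}: it suffices to produce a TRO $\tro\subseteq B(H,K)$ with $\mathcal{A}=[\tro^*B\tro]^{\closure}$ and $B=[\tro\mathcal{A}\tro^*]^{\closure}$, for then Corollary \ref{corollary:algebras} furnishes a cardinal $J$ and a unitary $U\in B(H\otimes\ell^2_J,K\otimes\ell^2_J)$ with $B\nsp M_J=U(\mathcal{A}\nsp M_J)U^*$, which is the claim.

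Let $\theta\colon\mathcal{A}'\to B'$ be the given $*$-isomorphism; it is automatically unital and normal. Form the von Neumann algebra $\mathcal{R}:=\{a'\oplus\theta(a'):a'\in\mathcal{A}'\}\subseteq B(H\oplus K)$ (it is $w^*$-closed since $\theta$ is normal), and set
\[
\tro:=\{z\in B(H,K):\theta(a')\,z=z\,a'\ \text{for all}\ a'\in\mathcal{A}'\}.
\]
A routine block computation of the commutant gives
\[
\mathcal{R}'=\begin{pmatrix}\mathcal{A}&\tro^*\\ \tro&B\end{pmatrix}\subseteq B(H\oplus K),
\]
identifying operators on $H\oplus K$ with $2\times 2$ operator matrices. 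In particular $\tro=q\mathcal{R}'p$ for the projections $p:=I_H\oplus 0$, $q:=0\oplus I_K$ lying in $\mathcal{R}'$, so $\tro$ is a TRO (a corner of a von Neumann algebra), while $p\mathcal{R}'p\cong\mathcal{A}$ and $q\mathcal{R}'q\cong B$; the inclusions $\tro\mathcal{A}\tro^*\subseteq B$ and $\tro^*B\tro\subseteq\mathcal{A}$ are immediate from $\mathcal{R}'$ being an algebra.

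It remains to identify the $w^*$-closures of $\tro\mathcal{A}\tro^*$ and $\tro^*B\tro$. Since $Z(\mathcal{R}')=\mathcal{R}'\cap\mathcal{R}=\{c\oplus\theta(c):c\in Z(\mathcal{A})\}$ and $\theta$ is unital, the central support in $\mathcal{R}'$ of each of $p$ and $q$ equals $I_{H\oplus K}$. Hence the $w^*$-closed two-sided ideal of $\mathcal{R}'$ generated by $p$ is all of $\mathcal{R}'$, and compressing by $q$ (which commutes with $w^*$-closure) gives $[\tro\tro^*]^{\closure}=[q\mathcal{R}'p\mathcal{R}'q]^{\closure}=q\mathcal{R}'q=B$; as $I_H\in\mathcal{A}$ this yields $[\tro\mathcal{A}\tro^*]^{\closure}=B$, and symmetrically $[\tro^*B\tro]^{\closure}=\mathcal{A}$. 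Thus $\mathcal{A}\sim_{\tro}B$ and Corollary \ref{corollary:algebras} applies.

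The only step that is not pure linking-algebra bookkeeping is the claim that $p$ and $q$ have full central support in $\mathcal{R}'$; this is exactly where unitality of $\theta$ enters, and it is what guarantees that the intertwiner TRO $\tro$ is large enough to implement the equivalence rather than merely a corner of it. Everything else — the block form of $\mathcal{R}'$, the module inclusions, and the appeal to Corollary \ref{corollary:algebras} — is routine. (Alternatively, one could bypass $\tro$ entirely: $\theta$ is spatially implemented after amplification by a large Hilbert space $L$, so $W(\mathcal{A}'\otimes I_L)W^*=B'\otimes I_L$ for a unitary $W$, and taking commutants via the commutation theorem gives $W(\mathcal{A}\nsp B(L))W^*=B\nsp B(L)$ directly, with $B(L)\cong M_J$ for $J=\dim L$; but the route through Corollary \ref{corollary:algebras} is the one in keeping with the development above.)
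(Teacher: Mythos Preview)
Your proof is correct and follows the paper's strategy exactly: establish that $\mathcal{A}$ and $B$ are TRO-equivalent and then invoke Corollary~\ref{corollary:algebras}. The only difference is that the paper cites \cite[Theorem 3.2]{troeq} (see also \cite[Corollary 8.5.38]{blecher}) for the TRO-equivalence, whereas you supply a self-contained linking-algebra/central-support argument --- which is precisely the content of those cited results.
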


\begin{proof}
    By \cite[Theorem 3.2]{troeq} (see also \cite[Corollary 8.5.38]{blecher}) $\mathcal{A},B$ are TRO-equivalent and then Corollary \ref{corollary:algebras} applies.
\end{proof}
Using Corollary \ref{corollary:algebras} combined with \cite[Theorem 3.3]{troeq}, we get the following:
\begin{corollary}
    Suppose $A\subseteq B(H)$, $B\subseteq B(K)$ are unital reflexive algebras and that there exists a $*$-isomorphism $\theta:\Delta(A)'\to \Delta(B)'$ such that $\theta(\lat A)=\lat B$. Then, $A\nsp M_J$ and $B\nsp M_J$ are unitarily equivalent for some cardinal $J$.
\end{corollary}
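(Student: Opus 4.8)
The plan is to recognize that the hypotheses on $A$ and $B$ are precisely Eleftherakis's spatial characterization of TRO-equivalence for reflexive algebras, and then to quote Corollary \ref{corollary:algebras}.

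First I would invoke \cite[Theorem 3.3]{troeq}: since $A\subseteq B(H)$ and $B\subseteq B(K)$ are unital reflexive algebras and there is a $*$-isomorphism $\theta:\Delta(A)'\to\Delta(B)'$ with $\theta(\lat A)=\lat B$, that theorem provides a TRO $\tro\subseteq B(H,K)$ implementing a TRO-equivalence between them, i.e. $B=[\tro A\tro^*]^{\closure}$ and $A=[\tro^*B\tro]^{\closure}$. One only has to check that the notion of TRO-equivalence in \cite{troeq} matches the hypothesis form of Corollary \ref{corollary:algebras}, which is immediate because reflexive algebras are by definition unital and $w^*$-closed.

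Second, with this $\tro$ at hand I would apply Corollary \ref{corollary:algebras} directly: it yields a cardinal $J$ and a unitary $U$ with $B\nsp M_J=U(A\nsp M_J)U^*$. Since this identity says exactly that $A\nsp M_J$ and $B\nsp M_J$ are unitarily equivalent, the proof is complete.

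I do not expect a genuine obstacle here: the two substantial ingredients — building the implementing TRO $\tro$ out of the abstract data $(\theta,\lat A,\lat B)$, and then building the unitary $U$ out of partial-isometry rows and columns of $\tro$ via the shift trick in the proof of Theorem \ref{uni} — have already been carried out in \cite[Theorem 3.3]{troeq} and in Corollary \ref{corollary:algebras} respectively. If one wanted a self-contained argument, the only nontrivial step would be reproving $A\sim_{TRO}B$ from a $*$-isomorphism $\theta$ with $\theta(\lat A)=\lat B$, which is done in \cite{troeq} by assembling $\tro$ from operators in $B(H,K)$ that intertwine $\lat A$ and $\lat B$ and using reflexivity to identify the resulting $w^*$-closed bimodules with $A$ and $B$.
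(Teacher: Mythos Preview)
Your proposal is correct and matches the paper's own argument exactly: the paper simply says ``Using Corollary \ref{corollary:algebras} combined with \cite[Theorem 3.3]{troeq}'', which is precisely the two-step outline you gave. No additional ingredients are needed.
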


As a final result of this section, the following Proposition is a stronger version of \cite[Theorem 2.14]{stable}:

\begin{proposition}
\label{CEStheorem}
    Let $\spacex,\spacey$ be $\Delta$-equivalent dual operator spaces and $(\pi,\phi,\sigma)$ a normal CES representation of the dual operator $A_\ell(\spacex)-A_r(\spacex)$-bimodule $\spacex$. Then, there exists a normal CES representation $(\rho,\psi,\tau)$ of the dual operator $A_\ell(\spacey)-A_r(\spacey)$-bimodule $\spacey$ such that $\phi(\spacex)$ is TRO-equivalent to $\psi(\spacey)$. Furthermore there exist TRO's $\tro_1$, $\tro_2$ implementing the TRO equivalence, so that 
    \begin{equation}
        \label{properties}
        \begin{split} 
        \pi(A_\ell(\spacex))=[\tro_2^*\tro_2]^\closure, \sigma(A_r(\spacex))=[\tro_1^*\tro_1]^\closure, \\ \rho(A_\ell(\spacey))=[\tro_2\tro_2^*]^\closure, \tau(A_r(\spacey))=[\tro_1\tro_1^*]^\closure.
    \end{split}
    \end{equation}
    
\end{proposition}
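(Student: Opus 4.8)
The plan has two parts: first normalise the $\Delta$-equivalence $\spacex\sim_\Delta\spacey$ so that it is implemented by TRO's whose linking von Neumann algebras are \emph{exactly} the adjointable multiplier algebras of $\spacex,\spacey$, and then transport the given representation $\phi$ across this normalised equivalence by the two-sided, operator-space analogue of the balanced-tensor-product construction in Lemma \ref{trom} (in the spirit of the proofs in \cite{strspaces} and \cite{stable}).

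\emph{Step 1.} Since $A_\ell(\spacex),A_r(\spacex)$ act faithfully on $\spacex$ and $\phi$ is completely isometric, the identity $\phi(axb)=\pi(a)\phi(x)\sigma(b)$ forces $\pi,\sigma$ to be injective; being $w^*$-continuous unital complete contractions on the $W^*$-algebras $A_\ell(\spacex),A_r(\spacex)$, they are therefore faithful normal $*$-homomorphisms. Identify $\spacex$ with $\phi(\spacex)\subseteq B(H,K)$ and put $\mathcal A_\ell:=\pi(A_\ell(\spacex))\subseteq B(K)$, $\mathcal A_r:=\sigma(A_r(\spacex))\subseteq B(H)$, so that $\spacex$ is a dual operator $\mathcal A_\ell-\mathcal A_r$-bimodule with $\mathcal A_\ell\spacex\subseteq\spacex$, $\spacex\mathcal A_r\subseteq\spacex$ and $\mathcal A_\ell,\mathcal A_r$ unital. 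Since $\spacex\sim_\Delta\spacey$, fix normal completely isometric representations of $\spacex$ and $\spacey$ acting nondegenerately and $w^*$-closed TRO's $\mathcal U_1,\mathcal U_2$ implementing $\spacex\sim_{TRO}\spacey$. Using the concrete realisation of $A_\ell(\cdot),A_r(\cdot)$ for nondegenerately acting $w^*$-closed operator spaces — so in particular $[\mathcal U_2^*\mathcal U_2]^\closure\subseteq A_\ell(\spacex)$, $[\mathcal U_1^*\mathcal U_1]^\closure\subseteq A_r(\spacex)$, and similarly on the $\spacey$ side — replace $\mathcal U_2$ by $[A_\ell(\spacey)\,\mathcal U_2\,A_\ell(\spacex)]^\closure$ and $\mathcal U_1$ by $[A_r(\spacey)\,\mathcal U_1\,A_r(\spacex)]^\closure$, the multiplier algebras taken in their concrete realisations on the relevant Hilbert spaces. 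A computation in the style of Remark \ref{nondeg} — using nondegeneracy of the $\mathcal U_i$ and unitality of the multiplier algebras — shows the result is again a pair of $w^*$-closed TRO's implementing $\spacex\sim_{TRO}\spacey$, now with
\[
[\mathcal U_2^*\mathcal U_2]^\closure=A_\ell(\spacex),\ \ [\mathcal U_2\mathcal U_2^*]^\closure=A_\ell(\spacey),\ \ [\mathcal U_1^*\mathcal U_1]^\closure=A_r(\spacex),\ \ [\mathcal U_1\mathcal U_1^*]^\closure=A_r(\spacey).
\]
We may therefore assume the reference equivalence already has this property.

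\emph{Step 2.} Following Lemma \ref{trom} with $\pi$ (resp.\ $\sigma$) in the role of the completely positive map, view $K$ as a left $A_\ell(\spacex)$-module via $\pi$ and form the $A_\ell(\spacex)$-balanced Hilbert-space tensor product $K_2:=\mathcal U_2\otimes_\pi K$, with $\langle u_1\otimes\xi_1,u_2\otimes\xi_2\rangle:=\langle\pi(u_2^*u_1)\xi_1,\xi_2\rangle$; similarly form $H_1:=\mathcal U_1\otimes_\sigma H$. This yields completely isometric TRO morphisms $t_2:\mathcal U_2\to B(K,K_2)$ and $t_1:\mathcal U_1\to B(H,H_1)$ with $t_i(w_1)^*t_i(w_2)=\pi(w_1^*w_2)$ (resp.\ $\sigma(w_1^*w_2)$). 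Set $\tro_2:=t_2(\mathcal U_2)\subseteq B(K,K_2)$ and $\tro_1:=t_1(\mathcal U_1)\subseteq B(H,H_1)$. Since the $\mathcal U_i$ act nondegenerately, \cite[Lemma 8.5.23]{blecher} provides, for a suitable cardinal $J$, rows $\en_i\in\mathcal U_i\nsp R_J$ of partial isometries with $\en_i\en_i^*=I$, and we define
\[
\psi(y):=t_2^{(1,J)}(\en_2)\,\phi^{(J)}\big(\en_2^{\,*}y\en_1\big)\,t_1^{(1,J)}(\en_1)^*,\qquad y\in\spacey
\]
(so that $\en_2^{\,*}y\en_1\in M_J(\spacex)$ via the reference identifications). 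Exactly as at the end of the proof of Lemma \ref{trom}, $\psi$ is a normal completely isometric representation of $\spacey$ satisfying $\psi(uxv^*)=t_2(u)\phi(x)t_1(v)^*$ and $\phi(u^*yv)=t_2(u)^*\psi(y)t_1(v)$ for $u\in\mathcal U_2$, $v\in\mathcal U_1$; hence, $\tro_1,\tro_2$ acting nondegenerately, Remark \ref{nondeg} gives $\phi(\spacex)=[\tro_2^*\psi(\spacey)\tro_1]^\closure$ and $\psi(\spacey)=[\tro_2\phi(\spacex)\tro_1^*]^\closure$, i.e.\ $\phi(\spacex)\sim_{\tro_1,\tro_2}\psi(\spacey)$. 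From $t_i(w_1)^*t_i(w_2)=\pi(w_1^*w_2),\ \sigma(w_1^*w_2)$ and $w^*$-continuity of $\pi,\sigma$ one gets $[\tro_2^*\tro_2]^\closure=\pi(A_\ell(\spacex))=\mathcal A_\ell$ and $[\tro_1^*\tro_1]^\closure=\sigma(A_r(\spacex))=\mathcal A_r$; and, $t_2,t_1$ being complete isometries, $[\tro_2\tro_2^*]^\closure$ and $[\tro_1\tro_1^*]^\closure$ are $*$-isomorphic, via normal $*$-isomorphisms $\rho$ and $\tau$, to $[\mathcal U_2\mathcal U_2^*]^\closure=A_\ell(\spacey)$ and $[\mathcal U_1\mathcal U_1^*]^\closure=A_r(\spacey)$ — which is \eqref{properties}. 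Finally $\psi(\spacey)=[\tro_2\phi(\spacex)\tro_1^*]^\closure$ is a nondegenerate dual operator $[\tro_2\tro_2^*]^\closure-[\tro_1\tro_1^*]^\closure$-bimodule over which $\psi$ intertwines the canonical $A_\ell(\spacey)-A_r(\spacey)$-action, so $\psi(ayb)=\rho(a)\psi(y)\tau(b)$ and $(\rho,\psi,\tau)$ is the required normal CES representation of the dual operator $A_\ell(\spacey)-A_r(\spacey)$-bimodule $\spacey$.

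I expect Step 1 to be the genuine obstacle: arranging that the four linking von Neumann algebras of the implementing TRO's coincide \emph{exactly} with the concrete copies of $A_\ell(\spacex),A_r(\spacex),A_\ell(\spacey),A_r(\spacey)$, rather than merely with $w^*$-closed $*$-subalgebras of them. This relies on the concrete description of the adjointable multiplier algebras of nondegenerately acting operator spaces, together with a verification that the enlargement $\mathcal U_2\rightsquigarrow[A_\ell(\spacey)\,\mathcal U_2\,A_\ell(\spacex)]^\closure$ (and similarly $\mathcal U_1$) preserves both the TRO axiom and the equivalence; it is precisely this refinement that upgrades \cite[Theorem 2.14]{stable} to the sharp form \eqref{properties}.
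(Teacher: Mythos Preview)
Your argument is essentially correct, and it takes a genuinely different route from the paper's. The paper first invokes \cite[Theorem 2.14]{stable} to produce $\psi$ with $\phi(\spacex)\sim_{TRO}\psi(\spacey)$, and then the crux is to show that the concrete algebra $\mathcal A=\Delta(\mathrm{alg}_\ell(\psi(\spacey)))$ exhausts $A_\ell(\spacey)$ (i.e.\ that the canonical $*$-homomorphism $\omega:\mathcal A\to A_\ell(\spacey)$ is onto). The paper does this via its main Theorem~\ref{uni}: the unitary equivalence $\psi(\spacey)\nsp M_J=U_2(\phi(\spacex)\nsp M_J)U_1^*$ forces $\mathcal A\nsp M_J\cong\pi(A_\ell(\spacex))\nsp M_J$, and combining this with $A_\ell(\spacex\nsp M_J)\cong A_\ell(\spacey\nsp M_J)$ and \cite[Theorem 5.10.1]{adjoint} yields surjectivity of $\omega$; only then are the TRO's enlarged to obtain \eqref{properties}. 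By contrast you front-load the enlargement: your Step~1 normalises the reference equivalence so that $[\mathcal U_i^*\mathcal U_i]^\closure$ and $[\mathcal U_i\mathcal U_i^*]^\closure$ already equal the concrete multiplier algebras, and this makes $\mathcal U_2$ a left $A_\ell(\spacey)$-module, so that in Step~2 the map $\rho(a)(m\otimes\xi):=(am)\otimes\xi$ is defined directly on all of $A_\ell(\spacey)$ and is automatically a normal $*$-isomorphism onto $[\tro_2\tro_2^*]^\closure$. Thus you bypass Theorem~\ref{uni} entirely and give a more self-contained, constructive proof; the paper's route, on the other hand, exhibits Proposition~\ref{CEStheorem} as an application of its main unitary result. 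Your worry about Step~1 is unfounded: the verification that $\mathcal V_2:=[A_\ell(\spacey)\,\mathcal U_2\,A_\ell(\spacex)]^\closure$ is a TRO reduces to $\mathcal U_2 A_\ell(\spacex)\mathcal U_2^*\subseteq A_\ell(\spacey)$, which follows since $\mathcal U_2 A_\ell(\spacex)\mathcal U_2^*\spacey\subseteq[\mathcal U_2 A_\ell(\spacex)\spacex\mathcal U_1^*]^\closure\subseteq\spacey$, and the identities $[\mathcal V_2^*\mathcal V_2]^\closure=A_\ell(\spacex)$ etc.\ then follow from unitality of the multiplier algebras exactly as in Remark~\ref{nondeg}.
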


\begin{proof}
    We can assume without loss of generality that $\phi$ is nondegenerate, for if the result holds for nondegenerate CES representations, given any normal CES representation $(\pi,\phi,\sigma)$ (with $\phi:\spacex\to B(H_1,H_2)$), by setting $\tilde{H}_1:=[\phi(\spacex)^*H_2]$ and $\tilde{H}_2:=[\phi(\spacex)H_1]$, the compressions $\tilde{\pi}:=P_{\tilde{H}_2}\pi(\cdot)|_{\tilde{H}_2}$, $\tilde{\phi}:=P_{\tilde{H}_2}\phi(\cdot)|_{\tilde{H}_1}$ and $\tilde{\sigma}:=P_{\tilde{H}_1}\sigma(\cdot)|_{\tilde{H}_1}$ form a normal nondegenerate CES representation $(\tilde{\pi},\tilde{\phi},\tilde{\sigma})$. Then, if $(\rho,\psi,\tau)$ (with $\psi:\spacey\to B(K_1,K_2)$) is a normal CES representation and $\tro_1\subseteq B(\tilde{H}_1,K_1)$, $\tro_2\subseteq B(\tilde{H}_2,K_2)$ are TRO's as in the conclusion of the Theorem, the TRO's $\mathcal{N}_1:=\tro_1P_{\tilde{H}_1}\subseteq B(H_1,K_1)$ and $\mathcal{N}_2:=\tro_2P_{\tilde{H}_2}\subseteq B(H_2,K_2)$ implement a TRO equivalence between $\phi(\spacex)$ and $\psi(\spacey)$ with the required properties.
    
    By \cite[Theorem 2.14]{stable}, given $(\pi,\phi,\sigma)$ (with $\phi:\spacex\to B(H_1,H_2)$), there exists a normal complete isometry $\psi:\spacey\to B(K_1,K_2)$ such that $\phi(\spacex)\sim_{TRO}\psi(\spacey)$ and we may assume that $\phi$ is nondegenerate. By \cite[Proposition 2.2]{stable}, we may also assume that $\psi$ is nondegenerate. We have that
    \[
    \pi(A_\ell(\spacex))=\Delta(\text{alg}_\ell(\phi(\spacex))).
    \]One inclusion is obvious, and if $T$ is in the right hand side, the map $\alpha_T:\spacex\to\spacex$, $\alpha_T(x):=\phi^{-1}(T\phi(x))$ is by definition in $A_\ell(\spacex)$, and nondegeneracy of $\phi$ implies that $T=\pi(\alpha_T)$. We define a von Neumann algebra $\mathcal{A}\subseteq B(K_2)$ as:
    \[
    \mathcal{A}:=\Delta(\text{alg}_\ell(\psi(\spacey)))
    \]and we get a normal unital $*$-homomorphism 
    \[
    \omega:\mathcal{A}\to A_\ell(\spacey)
    \]
    \[
    \omega(T)(y):=\psi^{-1}(T\psi(y))
    \]which is also faithful, due to the fact that $\psi$ is nondegenerate. We shall prove that $\omega$ is surjective too. By Theorem \ref{uni}, there exist a cardinal $J$ and unitary operators $U_1,U_2$ such that
    \begin{equation} \label{uniteq}
    \psi(\spacey)\nsp M_J=U_2(\phi(\spacex)\nsp M_J)U_1^*.
    \end{equation}
    One might easily observe that
    \[
    \pi(A_\ell(\spacex))\nsp M_J=\Delta(\text{alg}_\ell(\phi(\spacex)\nsp M_J))
    \]
    \[
    \mathcal{A}\nsp M_J=\Delta(\text{alg}_\ell(\psi(\spacey)\nsp M_J))
    \]and by \eqref{uniteq}, a direct computation yields 
    \[
    \Delta(\text{alg}_\ell(\psi(\spacey)\nsp M_J))=U_2\Delta(\text{alg}_\ell(\phi(\spacex)\nsp M_J))U_2^*
    \]therefore we get
    \begin{equation} \label{unitadjoint}
        \mathcal{A}\nsp M_J=U_2(\pi(A_\ell(\spacex))\nsp M_J)U_2^*.
    \end{equation}
    It follows that $\mathcal{A}\nsp M_J\cong A_\ell(\spacex)\nsp M_J$
    and by \eqref{uniteq} we have
    \[
    A_\ell(\spacex\nsp M_J)\cong A_\ell(\spacey\nsp M_J)
    \]so that $A_\ell(\spacex)\nsp M_J\cong A_\ell(\spacey)\nsp M_J$ using \cite[Theorem 5.10.1]{adjoint}. After computations, one can show that the resulting isomorphism $\mathcal{A}\nsp M_J\cong A_\ell(\spacey)\nsp M_J$ is in fact $\omega\otimes I$, which proves that $\omega$ is surjective. Setting $$\rho:=\omega^{-1}:A_\ell(\spacey)\to \mathcal{A}\subseteq B(K_2)$$ by definition of $\omega$ we have $\psi(a\cdot y)=\rho(a)\psi(y)$ for all $a\in A_\ell(\spacey)$, $y\in\spacey$. 

    Similarly, one can find a map $\tau:A_r(\spacey)\to B(H_2)$ so that $(\rho,\psi,\tau)$ is a normal CES representation.

    Assume now that $\ntro_1,\ntro_2$ implement the TRO equivalence of $\phi(\spacex),\psi(\spacey)$, i.e.
    \[
    \phi(\spacex)=[\ntro_2^*\psi(\spacey)\ntro_1]^\closure \ \ \text{and} \ \ \psi(\spacey)=[\ntro_2\phi(\spacex)\ntro_1^*]^\closure. 
    \]Clearly we have
    \[
    \ntro_2\pi(A_\ell(\spacex))\ntro_2^*\psi(\spacey)\subseteq\psi(\spacey), \ \ntro_2^*\rho(A_\ell(\spacey))\ntro_2\phi(\spacex)\subseteq\phi(\spacex)
    \]therefore $\ntro_2\pi(A_\ell(\spacex))\ntro_2^*\subseteq \rho(A_\ell(\spacey))$, $\ntro_2^*\rho(A_\ell(\spacey))\ntro_2\subseteq \pi(A_\ell(\spacex))$, and since $\ntro_1$ and $\ntro_2$ act nondegenerately,
    \[
    \pi(A_\ell(\spacex))=[\ntro_2^*\rho(A_\ell(\spacey))\ntro_2]^\closure, \ \rho(A_\ell(\spacey))=[\ntro_2\pi(A_\ell(\spacex))\ntro_2^*]^\closure
    \]and by similar arguments
    \[
    \sigma(A_r(\spacex))=[\ntro_1^*\tau(A_r(\spacey))\ntro_1]^\closure, \ \tau(A_r(\spacey))=[\ntro_1\sigma(A_r(\spacex))\ntro_1^*]^\closure.
    \]Finally, the TRO's $$\tro_1:=[\tau(A_r(\spacey))\ntro_1\sigma(A_r(\spacex))]^\closure, \ \tro_2:=[\rho(A_\ell(\spacey))\ntro_2\pi(A_\ell(\spacex))]^\closure$$ implement a TRO equivalence between $\phi(\spacex)$ and $\psi(\spacey)$ satisfying \eqref{properties}.
\end{proof}

\subsection{Dual operator systems}

In the category of dual operator systems, the morphisms are normal unital completely positive maps (equivalently, normal unital complete contractions), thus we call two dual operator systems $\es,\ti$ \textit{stably isomorphic} in case there exists a cardinal $J$ such that $\es\nsp M_J$ and $\ti\nsp M_J$ are isomorphic as dual operator systems, i.e. when there exists a normal unital complete order isomorphism from $\es\nsp M_J$ onto $\ti\nsp M_J$. The following is an analogue of Theorem \ref{uni}:

\begin{theorem}
\label{unisys}
    Let $\es\subseteq B(H)$, $\ti\subseteq B(K)$ be $w^*$-closed operator systems with $\es\sim_{TRO}\ti$ as dual operator systems. Then, there exists a cardinal $J$ and a unitary operator $U\in B(H\otimes \ell_J^2,K\otimes\ell_J^2)$ such that 
    \[
    \ti\nsp M_J=U(\es\nsp M_J)U^*.
    \]
\end{theorem}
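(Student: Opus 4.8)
The plan is to reduce Theorem \ref{unisys} to the already-established Theorem \ref{uni}, paying attention to the unitality that distinguishes operator systems from operator spaces. Since $\es\subseteq B(H)$ and $\ti\subseteq B(K)$ are $w^*$-closed operator systems with $\es\sim_\tro\ti$ for some TRO $\tro\subseteq B(H,K)$, they act nondegenerately because they are unital, so Theorem \ref{uni} applies with $H_1=H_2=H$, $K_1=K_2=K$, $\spacex=\es$, $\spacey=\ti$ and $\tro_1=\tro_2=\tro$. This produces a cardinal $J\geq|\mathbb{N}|$, vectors $\emm=(m_i)_{i\in J}\in\tro\nsp C_J$, $\en=(n_i)_{i\in J}\in\tro\nsp R_J$ with $\emm^*\emm=I_H$, $\en\en^*=I_K$, the projections $p=\emm\emm^*$, $q=\en^*\en$, and the two unitaries $U_1,U_2\in B(H\otimes\ell_J^2,K\otimes\ell_J^2)$ defined by \eqref{ff}, \eqref{ff2}, satisfying $\ti\nsp M_J=U_2(\es\nsp M_J)U_1^*$. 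The goal is to replace this two-sided conjugation by a single unitary $U$ with $\ti\nsp M_J=U(\es\nsp M_J)U^*$.

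First I would observe that the formulas \eqref{ff}, \eqref{ff2} for $S_i,T_i$ depend on $i$ only through the ambient Hilbert spaces $H_i,K_i$ and the data $\emm_i,\en_i,p_i,q_i$; since here $H_1=H_2=H$, $K_1=K_2=K$ and we may use the \emph{same} $\emm,\en$ (hence the same $p,q$) in both slots, the construction yields literally $U_1=U_2=:U$. Thus from Theorem \ref{uni} we already get $\ti\nsp M_J=U(\es\nsp M_J)U^*$ with $U$ unitary, which is exactly the asserted equality. The only point that needs a remark is that this $U$ is genuinely independent of the "left/right" role, which is immediate from inspecting \eqref{ff} and \eqref{ff2}: both $S_i$ and $T_i$ are built from $\emm_i$, $\en_i$, $p_i$, $q_i$, the shifts $W_L,W_R$, and the fixed unitary $V$, with no asymmetry between domain and codomain beyond the Hilbert spaces themselves. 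This mirrors exactly how Corollary \ref{corollary:algebras} is deduced from Theorem \ref{uni} for algebras, and the same one-line argument works verbatim for systems. Note that since $\es$ is self-adjoint and unital and $U(\es\nsp M_J)U^*=\ti\nsp M_J$, the map $\mathrm{Ad}_U$ is automatically a normal unital complete order isomorphism, so $\es$ and $\ti$ are stably isomorphic as dual operator systems.

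There is, in this reduction, essentially no obstacle: the only thing one might worry about is whether the TRO $\tro$ can be taken $w^*$-closed, but this is harmless since replacing $\tro$ by $\overline{\tro}^{w^*}$ preserves $\es\sim_\tro\ti$ and the von Neumann algebras $[\tro^*\tro]^\closure$, $[\tro\tro^*]^\closure$, exactly as in the opening paragraph of the proof of Lemma \ref{trom}; and that nondegeneracy of $\es,\ti$ forces nondegeneracy of $\tro$, which is recorded in the remarks following the definition of TRO-equivalence. Consequently the proof is just: apply Theorem \ref{uni} with a single TRO and a single choice of $\emm,\en$, and read off $U=U_1=U_2$.

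\begin{proof}
    As in the proof of Lemma \ref{trom} we may assume $\tro$ is $w^*$-closed, and since $\es,\ti$ are unital the TRO $\tro$ acts nondegenerately. Apply Theorem \ref{uni} to the $w^*$-closed subspaces $\spacex=\es\subseteq B(H)=B(H,H)$ and $\spacey=\ti\subseteq B(K)=B(K,K)$ with $\tro_1=\tro_2=\tro$, using the \emph{same} vectors $\emm=(m_i)_{i\in J}\in\tro\nsp C_J$ and $\en=(n_i)_{i\in J}\in\tro\nsp R_J$ (with $\emm^*\emm=I_H$, $\en\en^*=I_K$) in both slots. Then $p_1=p_2=\emm\emm^*=:p$ and $q_1=q_2=\en^*\en=:q$, and the operators $S_i,T_i$ of \eqref{ff}, \eqref{ff2} do not depend on $i$; hence $U_1=U_2=:U\in B(H\otimes\ell_J^2,K\otimes\ell_J^2)$ is a single unitary with
    \[
    \ti\nsp M_J=U_2(\es\nsp M_J)U_1^*=U(\es\nsp M_J)U^*,
    \]
    as required. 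Since $\es$ is a $w^*$-closed operator system, $\mathrm{Ad}_U$ restricts to a normal unital complete order isomorphism of $\es\nsp M_J$ onto $\ti\nsp M_J$, so in particular $\es$ and $\ti$ are stably isomorphic as dual operator systems.
\end{proof}
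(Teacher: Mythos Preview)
Your proof is correct and follows exactly the paper's approach: apply Theorem \ref{uni} with $\tro_1=\tro_2=\tro$ and the same $\emm,\en$ in both slots to obtain $U_1=U_2=:U$. The extra remarks you include (about $w^*$-closing $\tro$, nondegeneracy, and the resulting map being a normal unital complete order isomorphism) are harmless elaborations of the same one-line reduction.
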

\begin{proof}
    By definition, there exists a TRO $\tro\subseteq B(H,K)$ with $\ti=[\tro\es\tro^*]^\closure$ and $\es=[\tro^*\ti\tro]^\closure$. Then, Theorem \ref{uni} applies with $U=U_1=U_2$.
\end{proof} 

\begin{corollary} \label{dualopsysstable}  
    Two dual operator systems are $\Delta$-equivalent if and only if they are stably isomorphic.
\end{corollary}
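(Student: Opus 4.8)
The plan is to derive Corollary \ref{dualopsysstable} as a fairly direct consequence of the machinery already assembled: Theorem \ref{delta} (that $\Delta$-equivalence is an equivalence relation), Theorem \ref{unisys} (the concrete realization of TRO-equivalence via a single unitary), and the observation that $\es \sim_\Delta \es\nsp M_J$ for every dual operator system $\es$ and every cardinal $J$.

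First I would prove the ``only if'' direction. Suppose $\es \sim_\Delta \ti$. By definition there exist normal unital complete order embeddings $\phi:\es\to B(H)$, $\psi:\ti\to B(K)$ with $\phi(\es)\sim_{TRO}\psi(\ti)$ as (concrete $w^*$-closed) operator systems. Applying Theorem \ref{unisys} to the concrete systems $\phi(\es)\subseteq B(H)$ and $\psi(\ti)\subseteq B(K)$, I obtain a cardinal $J$ and a unitary $U\in B(H\otimes\ell_J^2,K\otimes\ell_J^2)$ with $\psi(\ti)\nsp M_J = U(\phi(\es)\nsp M_J)U^*$. Now conjugation by a unitary is a normal unital complete order isomorphism (it is a $*$-isomorphism of $B(H\otimes\ell_J^2)$ onto $B(K\otimes\ell_J^2)$ restricted to these subsystems, and it is a $w^*$-homeomorphism since $\mathrm{Ad}_U$ is $w^*$-continuous). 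Hence $\phi(\es)\nsp M_J \cong \psi(\ti)\nsp M_J$ as dual operator systems. Since $\phi$ and $\psi$ are themselves normal unital complete order isomorphisms onto their ranges, amplifying by $I_{M_J}$ gives normal unital complete order isomorphisms $\es\nsp M_J \cong \phi(\es)\nsp M_J$ and $\ti\nsp M_J \cong \psi(\ti)\nsp M_J$; composing the three isomorphisms yields $\es\nsp M_J \cong \ti\nsp M_J$, so $\es$ and $\ti$ are stably isomorphic.

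For the ``if'' direction I would use Theorem \ref{delta}. Suppose $\es\nsp M_J \cong \ti\nsp M_J$ as dual operator systems for some cardinal $J$. It suffices to show that $\es \sim_\Delta \es\nsp M_J$ for every dual operator system $\es$ and every cardinal $J$, because then $\es \sim_\Delta \es\nsp M_J \sim_\Delta \ti\nsp M_J \sim_\Delta \ti$, where the middle $\Delta$-equivalence follows from the fact that $\Delta$-equivalence is coarser than normal unital complete order isomorphism (realize both abstract systems on a common concrete copy and take $\tro = \C I$), and transitivity comes from Theorem \ref{delta}. To see $\es\sim_\Delta \es\nsp M_J$: fix a $w^*$-closed concrete realization $\es\subseteq B(H)$, so $\es\nsp M_J\subseteq B(H\otimes\ell_J^2)$, and take the TRO $\tro := C_J(\C I_H) = \{\,(\lambda_i)_{i\in J}\,\}\subseteq B(H,H\otimes\ell_J^2)$ of ``column'' operators (equivalently $e_{i1}\otimes I_H$, $i\in J$); one checks directly that $\tro$ is a TRO acting nondegenerately, that $[\tro\,\es\,\tro^*]^{\closure} = \es\nsp M_J$ and $[\tro^*(\es\nsp M_J)\tro]^{\closure} = \es$, so $\es \sim_{TRO} \es\nsp M_J$ and hence $\es\sim_\Delta\es\nsp M_J$.

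**The main obstacle** is the bookkeeping for the ``if'' direction — specifically, making sure that a normal unital complete order isomorphism between the abstract systems $\es\nsp M_J$ and $\ti\nsp M_J$ genuinely gives a $\Delta$-equivalence, and that $\es\sim_\Delta\es\nsp M_J$ with the column TRO. Neither step is deep, but both require care: for the first one must note that $\Delta$-equivalence only asks for the \emph{existence} of concrete realizations with TRO-equivalent images, so any isomorphism lets us transport one realization to another and use the trivial TRO $\C I$; for the second, the identities $[\tro\es\tro^*]^{\closure}=\es\nsp M_J$ and $[\tro^*(\es\nsp M_J)\tro]^{\closure}=\es$ should be verified on the level of matrix entries using $n_i^*n_j = \delta_{ij}I_H$ and $\sum_i n_in_i^* = I_{H\otimes\ell_J^2}$ ($w^*$-convergence). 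Everything else is a formal consequence of Theorems \ref{delta} and \ref{unisys}.
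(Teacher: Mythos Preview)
Your proposal is correct and follows essentially the same route the paper has in mind: the ``only if'' direction is exactly the intended application of Theorem \ref{unisys}, and the ``if'' direction is the argument the paper sketches after Theorem \ref{paulseneq} (that $\Delta$-equivalence is an equivalence relation, is coarser than normal unital complete order isomorphism, and satisfies $\es\sim_\Delta\es\nsp M_J$), specialized here to dual operator systems via Theorem \ref{delta}. You have simply made explicit the details the paper leaves to the reader.
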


An analogue of Proposition \ref{CEStheorem} holds for dual operator systems too. It is a dual version of \cite[Proposition 3.9]{systems}, but with a completely different proof, as the proof of the latter relies on the theory of $C^*$-envelopes, which do not appear in a dual context in general:

\begin{proposition} \label{CESsystems}
    Suppose $\es\sim_\Delta\ti$. Then, there exist normal unital complete order embeddings $\phi:\es\to B(H)$, $\psi:\ti\to B(K)$ and a TRO $\tro\subseteq B(H,K)$ such that the restrictions of $\phi$ and $\psi$ to $\alfa_\es$ and $\alfa_\ti$ respectively are $*$-homomorphisms, and $\phi(\es)\sim_\tro\psi(\ti)$. Furthermore, $\phi(\alfa_\es)=[\tro^*\tro]^\closure$, $\psi(\alfa_\ti)=[\tro\tro^*]^\closure$, thus $\phi(\alfa_\es)\sim_\tro\psi(\alfa_\ti)$. 
\end{proposition}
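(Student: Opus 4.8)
The plan is to combine the transitivity machinery from Theorem~\ref{delta} (in the form of Remark~\ref{bimap}) with a description of the TRO implementing the equivalence as a "product" of adjointable-multiplier algebras, exactly as in the last paragraph of the proof of Proposition~\ref{CEStheorem}. First I would invoke Remark~\ref{bimap}: since $\es\sim_\Delta\ti$, there are normal unital complete order embeddings $\Phi:\es\to B(H)$ and $\psi_0:\ti\to B(K_0)$ with $\Phi|_{\alfa_\es}$ a $*$-homomorphism and $\Phi(\es)\sim_{\ntro_0}\psi_0(\ti)$ for some $w^*$-closed TRO $\ntro_0$. The goal is then to upgrade $\psi_0$ so that $\psi_0|_{\alfa_\ti}$ is also a $*$-homomorphism and to arrange the identifications $\phi(\alfa_\es)=[\tro^*\tro]^{\closure}$, $\psi(\alfa_\ti)=[\tro\tro^*]^{\closure}$.

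For the first point I would apply Lemma~\ref{trom} a second time, now with the roles of $\es$ and $\ti$ interchanged. Concretely, $\psi_0(\ti)$ is a bimodule over $[\ntro_0\ntro_0^*]^{\closure}$, so by Lemma~\ref{useful} we have $[\ntro_0\ntro_0^*]^{\closure}\subseteq\psi_0(\alfa_\ti)$ and $\psi_0^{-1}:[\ntro_0\ntro_0^*]^{\closure}\to\alfa_\ti$ is a $*$-homomorphism; composing with a normal representation $\gamma:\ti\to B(K)$ as in Remark~\ref{rep2} (so $\gamma|_{\alfa_\ti}$ is a $*$-homomorphism), the map $\gamma\circ\psi_0^{-1}$ restricted to $[\ntro_0\ntro_0^*]^{\closure}$ is a $*$-homomorphism, and Lemma~\ref{trom} produces a normal unital complete order embedding $\phi:\es\to B(H')$ with $\phi(\es)\sim_{TRO}\gamma(\ti)$; one checks that $\phi|_{\alfa_\es}$ is still a $*$-homomorphism because it is obtained from $\Phi|_{\alfa_\es}$ by the TRO-morphism construction in Lemma~\ref{trom}, which sends $*$-homomorphisms to $*$-homomorphisms (here I would need to record that in Lemma~\ref{trom} the hypothesis that $\phi|_{[\tro^*\tro]^{\closure}}$ be multiplicative is inherited by the output $\psi$ on the corner, and apply it to $\Phi$ which is multiplicative on all of $\alfa_\es\supseteq[\ntro_0^*\ntro_0]^{\closure}$). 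Set $\psi:=\gamma$; then both $\phi|_{\alfa_\es}$ and $\psi|_{\alfa_\ti}$ are $*$-homomorphisms and $\phi(\es)\sim_{\mathcal{N}}\psi(\ti)$ for some $w^*$-closed TRO $\mathcal{N}\subseteq B(H',K)$.

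For the second point — getting $\phi(\alfa_\es)=[\mathcal{N}^*\mathcal{N}]^{\closure}$ rather than merely $[\mathcal{N}^*\mathcal{N}]^{\closure}\subseteq\phi(\alfa_\es)$ — I would replace $\mathcal{N}$ by the "saturated" TRO $\tro:=[\psi(\alfa_\ti)\,\mathcal{N}\,\phi(\alfa_\es)]^{\closure}$, mimicking the final line of the proof of Proposition~\ref{CEStheorem} (with $\pi,\sigma,\rho,\tau$ all replaced by the single maps $\phi|_{\alfa_\es}$, $\psi|_{\alfa_\ti}$, using that $A_\ell(\es)\cong A_r(\es)$ for operator systems). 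Since $\mathcal{N}$ acts nondegenerately and $\phi(\alfa_\es)\ni I$, $\psi(\alfa_\ti)\ni I$, one has $\mathcal{N}\subseteq\tro$, so $\tro$ still implements the equivalence; and the inclusions $\mathcal{N}\phi(\alfa_\es)\mathcal{N}^*\subseteq\psi(\alfa_\ti)$, $\mathcal{N}^*\psi(\alfa_\ti)\mathcal{N}\subseteq\phi(\alfa_\es)$ (which follow from $\mathcal{N}\phi(\es)\mathcal{N}^*\subseteq\psi(\ti)$, Lemma~\ref{useful}, and the fact that $\phi(\alfa_\es)$ is exactly the adjointable part of $\mathrm{alg}_\ell(\phi(\es))$) give, after the usual nondegeneracy sandwiching as in Remark~\ref{nondeg}, the identities $\phi(\alfa_\es)=[\tro^*\psi(\alfa_\ti)\tro]^{\closure}$ and $[\tro^*\tro]^{\closure}\subseteq\phi(\alfa_\es)$; the reverse inclusion $\phi(\alfa_\es)\subseteq[\tro^*\tro]^{\closure}$ holds because $\tro^*\tro\supseteq\phi(\alfa_\es)^*\mathcal{N}^*\mathcal{N}\phi(\alfa_\es)$ and $[\mathcal{N}^*\mathcal{N}]^{\closure}$ is a unital von Neumann algebra containing enough to recover the unit, so $[\tro^*\tro]^{\closure}$ is a unital $C^*$-subalgebra of $\phi(\alfa_\es)$ that acts nondegenerately on the same space and hence equals $\phi(\alfa_\es)$. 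The symmetric argument gives $\psi(\alfa_\ti)=[\tro\tro^*]^{\closure}$, and the last assertion $\phi(\alfa_\es)\sim_\tro\psi(\alfa_\ti)$ is then immediate from these two identities together with $[\tro\phi(\alfa_\es)\tro^*]^{\closure}=[\tro\tro^*\tro\tro^*]^{\closure}=\psi(\alfa_\ti)$ and its mirror.

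The main obstacle I expect is the bookkeeping in the second point: showing that after passing to the saturated TRO $\tro$ one gets the sharp equalities $\phi(\alfa_\es)=[\tro^*\tro]^{\closure}$ and $\psi(\alfa_\ti)=[\tro\tro^*]^{\closure}$, and not just containments, while simultaneously preserving that $\phi|_{\alfa_\es}$ and $\psi|_{\alfa_\ti}$ are $*$-homomorphisms and that $\tro$ still implements $\phi(\es)\sim\psi(\ti)$. This is where Lemma~\ref{useful} does the real work — it identifies $\phi(\alfa_\es)$ intrinsically as $\Delta(\mathrm{alg}_\ell(\phi(\es)))$ — and care is needed because, unlike in Proposition~\ref{CEStheorem}, here the "multiplier algebra" and the operator system live on overlapping data (the embedding of $\alfa_\es$ inside $\es$), so one must keep the two roles of $A_\ell(\es)$ notationally apart exactly as the Preliminaries section does with $\alfa_\es$ versus $A_\ell(\es)$.
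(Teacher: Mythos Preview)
Your ``second point'' (saturating the TRO and reading off $\phi(\alfa_\es)=[\tro^*\tro]^{\closure}$) is fine and matches the paper's final step. The difficulty is in your first point, and it is a genuine gap rather than just bookkeeping.

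When you apply Lemma~\ref{trom} the second time (with the roles of $\es$ and $\ti$ swapped, input $\gamma\circ\psi_0^{-1}$ on $\psi_0(\ti)$), the output is a map $\alpha$ on $\Phi(\es)$, and your new $\phi$ is $\alpha\circ\Phi$. Lemma~\ref{trom} guarantees only that $\alpha$ is a $*$-homomorphism on $[\ntro_0^*\ntro_0]^{\closure}$; it says nothing about multiplicativity on the possibly larger algebra $\Phi(\alfa_\es)$. The parenthetical you wrote does not fix this: the extra multiplicativity hypothesis you have is on the \emph{input} map (namely $\gamma\circ\psi_0^{-1}$ is multiplicative on $\psi_0(\alfa_\ti)$), and the proof of Lemma~\ref{trom} gives no mechanism for transferring that to multiplicativity of the \emph{output} $\alpha$ beyond $[\ntro_0^*\ntro_0]^{\closure}$ --- the identity $\psi(m_1m_2^*)=t(m_1)t(m_2)^*$ has no analogue for elements outside the TRO-algebra. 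The fact that the original $\Phi$ was multiplicative on $\alfa_\es$ is irrelevant here, since $\Phi$ is not the input to this second application. So after your construction you have $\psi|_{\alfa_\ti}$ multiplicative but have lost control of $\phi|_{\alfa_\es}$; iterating again would swap the problem back.

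The paper avoids this ping-pong entirely. Starting from Remark~\ref{bimap} with $\phi|_{\alfa_\es}$ a $*$-homomorphism and $\phi(\es)\sim_\ntro\psi(\ti)$, it does \emph{not} change $\phi$ or $\psi$. Instead it invokes the proof of Proposition~\ref{CEStheorem}: the map $\omega:\Delta(\mathrm{alg}_\ell(\psi(\ti)))\to A_\ell(\ti)$, $\omega(T)(y)=\psi^{-1}(T\psi(y))$, is shown there to be a $*$-\emph{isomorphism} (surjectivity is exactly what needs Theorem~\ref{uni}), and one sets $\rho:=\omega^{-1}$. Then $\rho(u)\psi(1)=\psi(u(1))$ forces $\rho=\psi\circ i_\ti$, so $\psi|_{\alfa_\ti}$ \emph{is} the $*$-homomorphism $\rho$. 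Both $\phi|_{\alfa_\es}$ and $\psi|_{\alfa_\ti}$ are now multiplicative with the \emph{original} $\phi,\psi$ untouched, and Proposition~\ref{CEStheorem} already delivers $\phi(\alfa_\es)\sim_\ntro\psi(\alfa_\ti)$, from which the saturated $\tro:=[\psi(\alfa_\ti)\ntro\phi(\alfa_\es)]^{\closure}$ immediately gives the equalities. The moral: the missing ingredient in your route is precisely the surjectivity of $\omega$, which is what the unitary-equivalence Theorem~\ref{uni} is for.
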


\begin{proof}
    By Remark \ref{bimap}, we can pick normal unital complete order embeddings $\phi:\es\to B(H)$, $\psi:\ti\to B(K)$ and a TRO $\ntro\subseteq B(H,K)$ so that $\phi(\es)\sim_\ntro \psi(\ti)$ and the restriction of $\phi$ to $\alfa_\es$ is a $*$-homomorphism. Note that with $i_\es:A_\ell(\es)\to \es$ and $j_\es:A_r(\es)\to \es$ the completely isometric embeddings $u\mapsto u(1)$, $(\phi\circ i_\es,\phi,\phi\circ j_\es)$ is a normal faithful nondegenerate CES representation of $\es$ considered as a $A_\ell(\es)-A_r(\es)$-bimodule. By (the proof of) Proposition \ref{CEStheorem}, there exist normal unital $*$-homomorphisms $\rho:A_\ell(\ti)\to B(K)$ and $\tau:A_r(\ti)\to B(K)$ such that $(\rho,\psi,\tau)$ is a normal faithful CES representation of the $A_\ell(\ti)-A_r(\ti)$-bimodule $\ti$ and 
    \begin{equation}
    \begin{split}
        \phi(\alfa_\es)=\phi\circ i_\es(A_\ell(\es))\sim_\ntro \rho(A_\ell(\ti)), \\ \phi(\alfa_\es)=\phi\circ j_\es(A_r(\es))\sim_\ntro \tau(A_r(\ti)).
        \end{split}
    \end{equation}
    For $i_\ti:A_\ell(\ti)\to \ti$ and $j_\ti:A_r(\ti)\to \ti$ the embeddings $u\mapsto u(1)$, we clearly have $\rho=\psi\circ i_\ti$ and $\tau=\psi\circ j_\ti$, therefore $\psi|_{\alfa_\ti}$ is a $*$-homomorphism and $\phi(\alfa_\es)\sim_\ntro \psi(\alfa_\ti)$. Finally, setting $\tro:=[\psi(\alfa_\ti)\ntro\phi(\alfa_\es)]^\closure$ completes the proof.
\end{proof}

\section{The strong case} \label{sec:strongstable}

\subsection{Stably isomorphic operator spaces}

Recall \cite{strspaces} that two norm-closed subspaces $\spacex\subseteq B(H_1,H_2),\spacey\subseteq B(K_1,K_2)$ are called strongly TRO-equivalent if there exist TRO's $\tro_1\subseteq B(H_1,K_1)$ and $\tro_2\subseteq B(H_2,K_2)$ such that
\[
\spacey=[\tro_2\spacex\tro_1^*]^\normcl \ \ \text{and} \ \ \spacex=[\tro_2^*\spacey\tro_1]^\normcl.
\]

Two (abstract) operator spaces $\spacex,\spacey$ are called strongly $\Delta$-equivalent if there exist complete isometries $\phi:\spacex\to B(H_1,H_2)$, $\psi:\spacey\to B(K_1,K_2)$ such that $\phi(\es)\sim_{TRO}\psi(\ti)$. They are called \textit{stably isomorphic} in case $\spacex\otimes \kk$ and $\spacey\otimes \kk$ are completely isometrically isomorphic. By \cite[Theorem 4.3]{strspaces}, if $\spacex,\spacey$ are stably isomorphic then they are $\Delta$-equivalent. As indicated by the case of $C^*$- algebras, where there exist examples of strongly Morita equivalent $C^*$-algebras which are not stably isomorphic \cite{moritacstar}, the converse is not expected to be true in general.

However, the converse statement is also true if one replaces $\Delta$-equivalence by $\sigma\Delta$-equivalence. We recall the following definitions from \cite{stablemaps}:

\begin{definition} \label{sigmatro}
    Let $\tro\subseteq B(H,K)$ be a norm-closed TRO. $\tro$ is called a \textit{$\sigma$-TRO} in case there exist sequences $(m_k)_{k\in\mathbb{N}}$ and $(n_k)_{k\in\mathbb{N}}$ in $\tro$ such that
    \[
    \left\|\sum_{i=1}^k m_i^*m_i\right\|\leq 1  \ \ \text{and} \ \  \left\| \sum_{i=1}^k n_in_i^*\right\|\leq 1 \ \ \text{for all $k$,}
    \]that satisfy 
    \[
    \sum_{k=1}^\infty mm_k^*m_k=m, \ \sum_{k=1}^\infty n_kn_k^*n=n
    \]as norm-convergent sums for all $m,n\in\tro$.
\end{definition}

Two norm-closed subspaces $\spacex\subseteq B(H_1,H_2)$, $\spacey\subseteq B(K_1,K_2)$ are called \textit{$\sigma$-strongly TRO-equivalent} in case there exist $\sigma$-TRO's $\tro_i\subseteq B(H_i,K_i)$, $i=1,2$ such that
\[
\spacex=[\tro_2^*\spacey\tro_1]^\normcl , \ \spacey=[\tro_2\spacex\tro_1^*]^\normcl.
\]Two (abstract) operator spaces $\spacex,\spacey$ are called \textit{$\sigma\Delta$-equivalent} in case there exist completely isometric representations $\phi:\spacex\to B(H_1,H_2)$, $\psi:\spacey\to B(K_1,K_2)$ such that $\phi(\spacex)$ and $\psi(\spacey)$ are $\sigma$-strongly TRO-equivalent. We have the following:

\begin{theorem}{\cite[Theorem 4.6]{strspaces}} \label{thmelekak}
    If $\spacex\subseteq B(H_1,H_2)$ and $\spacey\subseteq B(K_1,K_2)$ are $\sigma$-strongly TRO-equivalent, then $\spacex$ and $\spacey$ are stably isomorphic.
\end{theorem}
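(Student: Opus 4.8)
The plan is to adapt the explicit ``conjugation by isometries'' construction underlying Theorem \ref{uni} (see also \cite[Lemma 3.1]{stalgebras} and the proofs in \cite{ele,strspaces}) to the norm-closed $\sigma$-finite setting, producing directly a completely isometric isomorphism $\spacex\otimes\kk\cong\spacey\otimes\kk$. First I would extract the algebraic content of the hypothesis. Fix $\sigma$-TRO's $\tro_1,\tro_2$ implementing the equivalence and, for $i=1,2$, the sequences $(m_{i,k})_k,(n_{i,k})_k\subseteq\tro_i$ from Definition \ref{sigmatro}; form the column $\emm_i:=(m_{i,k})_k\in B(H_i,\ell^2\otimes K_i)$ and the row $\en_i:=(n_{i,k})_k\in B(\ell^2\otimes H_i,K_i)$, which are contractions by the bounds in Definition \ref{sigmatro}. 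The norm-convergent identities $\sum_k m\,m_{i,k}^*m_{i,k}=m$ and $\sum_k n_{i,k}n_{i,k}^*\,n=n$ (for $m,n\in\tro_i$) give $\emm_i\emm_i^*\emm_i=\emm_i$ and $\en_i\en_i^*\en_i=\en_i$, so $e_i:=\emm_i^*\emm_i=\sum_k m_{i,k}^*m_{i,k}\in B(H_i)$ and $f_i:=\en_i\en_i^*=\sum_k n_{i,k}n_{i,k}^*\in B(K_i)$ are projections, with $e_i$ a right identity and $f_i$ a left identity for $\tro_i$. Using $\spacex=[\tro_2^*\spacey\tro_1]^\normcl$ and $\spacey=[\tro_2\spacex\tro_1^*]^\normcl$ one then gets $e_2\,\spacex\,e_1=\spacex$ and $f_2\,\spacey\,f_1=\spacey$; after amplifying by $\kk$ (and absorbing $\ell^2\otimes\ell^2\cong\ell^2$) the corresponding amplified projections act as the identity on $\spacex\otimes\kk$ and on $\spacey\otimes\kk$.

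Next I would build, for $i=1,2$, a partial isometry $U_i\in B(H_i\otimes\ell^2,K_i\otimes\ell^2)$ in the spirit of \eqref{ff}--\eqref{ff2}: $U_i=S_i+T_i$, where $S_i$ is assembled from the column $\emm_i$ and a left shift on an auxiliary $\ell^2$ (encoding the left action of $\tro_2$ and the right action of $\tro_1^*$), $T_i$ is assembled from the row $\en_i$ and a right shift, and the two are glued so that $S_i,T_i$ have orthogonal ranges and orthogonal initial spaces and $S_i^*S_i+T_i^*T_i$, $S_iS_i^*+T_iT_i^*$ are exactly the amplified projections of the first step. Hence $U_i^*U_i$ acts as the identity on $\spacex\otimes\kk$ and $U_iU_i^*$ as the identity on $\spacey\otimes\kk$; unlike the dual case of Theorem \ref{uni} one cannot in general turn $U_i$ into a genuine unitary (since $\sum_k m_{i,k}^*m_{i,k}\ne I$), but a corner-to-corner isometry suffices. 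A direct computation, identical to the one in Theorem \ref{uni} and using only $\tro_2\spacex\tro_1^*\subseteq\spacey$, $\tro_2^*\spacey\tro_1\subseteq\spacex$ and the identities above, then shows $Z\mapsto U_2ZU_1^*$ maps $\spacex\otimes\kk$ onto $\spacey\otimes\kk$ with inverse $W\mapsto U_2^*WU_1$; being completely contractive with completely contractive inverse, it is a complete isometry, so $\spacex\otimes\kk\cong\spacey\otimes\kk$, which is the claim. (One may assume throughout that $\spacex,\spacey$ act nondegenerately, compressing $H_i,K_i$ and $\tro_i$ if necessary, as the $\sigma$-TRO property passes to such compressions and the conclusion is insensitive to the ambient Hilbert spaces; alternatively the projections $e_i,f_i$ already absorb any degeneracy.)

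I expect the main obstacle to be the bookkeeping in the second step, exactly as in \cite{stalgebras,strspaces,ele}: choosing the auxiliary Hilbert space and the shifts so that $U_i=S_i+T_i$ is genuinely a partial isometry and so that the glued projections act as the identity on $\spacex\otimes\kk$ rather than merely on $\spacex\otimes M_\infty$. This is precisely the point where one needs norm --- not merely $w^*$ --- convergence of the sums $\sum_k m\,m_{i,k}^*m_{i,k}=m$, i.e. where the $\sigma$-TRO hypothesis is indispensable: for a general norm-closed TRO these sums converge only strongly, the resulting partial isometries are not norm-limits of their finite truncations, and the sandwiched maps fail to take values in the compact tensor product $\spacex\otimes\kk$.
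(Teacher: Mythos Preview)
The paper does not itself prove this theorem---it is quoted from \cite[Theorem 4.6]{strspaces}---but immediately afterwards proves the sharper Theorem~\ref{opspc} (under the added nondegeneracy hypothesis, so that the $U_i$ are genuine unitaries), and the Remark following it notes that the resulting map $Z\mapsto U_2ZU_1^*$ coincides with the one from \cite{strspaces}. Your proposal follows exactly this construction: the parenthetical reduction to the nondegenerate case is precisely the paper's route, and your identification of where the $\sigma$-TRO hypothesis (norm convergence of the sums) is indispensable matches the key technical step in the proof of Theorem~\ref{opspc}.
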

Following the same steps of the proof of Theorem \ref{uni}, we show something stronger:

\begin{theorem} \label{opspc}
    If $\spacex\subseteq B(H_1,H_2)$ and $\spacey\subseteq B(K_1,K_2)$ are $\sigma$-strongly TRO-equivalent and they act nondegenerately, then there exist unitary operators $U_i\in B(H_i\otimes\ell^2,K_i\otimes\ell^2)$ such that 
    \[
    \spacey\otimes\kk=U_2(\spacex\otimes\kk) U_1^*.
    \]
\end{theorem}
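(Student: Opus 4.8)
The plan is to replay the proof of Theorem \ref{uni} verbatim, now in the norm topology. Since $\spacex$ and $\spacey$ are $\sigma$-strongly TRO-equivalent, fix $\sigma$-TRO's $\tro_1\subseteq B(H_1,K_1)$ and $\tro_2\subseteq B(H_2,K_2)$ with $\spacey=[\tro_2\spacex\tro_1^*]^{\normcl}$ and $\spacex=[\tro_2^*\spacey\tro_1]^{\normcl}$, and for $i=1,2$ let $(m_{i,k})_{k}$ and $(n_{i,k})_{k}$ be the sequences in $\tro_i$ provided by Definition \ref{sigmatro}. These take the place of the families $\emm_i,\en_i$ which, in the $w^*$-setting of Theorem \ref{uni}, were extracted from \cite[Lemma 8.5.23]{blecher}: I form the column contractions $\emm_i:=(m_{i,k})_{k\in\mathbb{N}}\in B(H_i,K_i\otimes\ell^2)$ and the row contractions $\en_i:=(n_{i,k})_{k\in\mathbb{N}}\in B(H_i\otimes\ell^2,K_i)$, and the roles of the cardinal $J$ and of $\nsp M_J$ are now played by $\mathbb{N}$ and $\otimes\kk$. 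Since $\spacex,\spacey$ act nondegenerately, so do $\tro_1,\tro_2$ (a direct density argument), and the conditions in Definition \ref{sigmatro} say that $e_i:=\emm_i^*\emm_i$ acts as a right identity and $f_i:=\en_i\en_i^*$ as a left identity for $\tro_i$; hence $xe_1=x=e_2x$ for all $x\in\spacex$ and $f_2y=y=yf_1$ for all $y\in\spacey$, so that $x\mapsto\emm_2x\emm_1^*$ and $y\mapsto\en_2^*y\en_1$ are completely isometric on $\spacex$ and $\spacey$ respectively.

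The first substantive point — and the only place where the $\sigma$-TRO hypothesis, rather than plain TRO-equivalence, is genuinely used — is that these maps, and all their amplifications, keep everything inside $\otimes\kk$: $\emm_2x\emm_1^*\in\spacey\otimes\kk$ for $x\in\spacex$ and $\en_2^*y\en_1\in\spacex\otimes\kk$ for $y\in\spacey$. This follows because, for $s\in\tro_2$, the series $\sum_k s\,m_{2,k}^*m_{2,k}=s$ converges in norm, hence so does $\sum_k s\,m_{2,k}^*m_{2,k}\,s^*=ss^*$, so the column $(m_{2,k}s^*)_k$ is a norm-limit of finitely supported columns, i.e. compact; expanding $x\in\spacex$ as a norm-limit of elements of $[\tro_2^*\spacey\tro_1]$ and using $\|\emm_i\|\le 1$ then shows the column $\emm_2x$ is compact, whence $\emm_2x\emm_1^*$, a compact column times the bounded row $\emm_1^*$, lies in $\spacey\otimes\kk$; the statement for $\en_2^*y\en_1$ is symmetric, using $\sum_k n_{2,k}n_{2,k}^*s=s$. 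This is exactly the mechanism replacing the use of $w^*$-limits and $\nsp M_J$ in Theorem \ref{uni}.

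For the unitaries I would follow \eqref{ff}, \eqref{ff2} literally. Fix a unitary $V\colon\ell^2\to\ell^2\otimes\ell^2\otimes\ell^2$, let $W_L,W_R\in B(\ell^2)$ be the one-sided shifts, and let $p_i,q_i$ be the range projections of $\emm_i\emm_i^*\in B(K_i\otimes\ell^2)$ and $\en_i^*\en_i\in B(H_i\otimes\ell^2)$ (these are the analogues of the projections $p_i,q_i$ in Theorem \ref{uni}, which there were genuine projections only because the frame vectors were there (co)isometric). Define $S_i,T_i\in B(H_i\otimes\ell^2,K_i\otimes\ell^2)$ by the formulas \eqref{ff}, \eqref{ff2} with $\ell_J^2$ replaced throughout by $\ell^2$; the shift summands $q_i\otimes W_L+q_i^\perp\otimes I_{\ell^2}$ and $p_i\otimes W_R+p_i^\perp\otimes I_{\ell^2}$ are precisely what absorbs the failure of $\emm_i,\en_i$ to be (co)isometric. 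One then verifies the orthogonality relations $S_iS_i^*+T_iT_i^*=I=S_i^*S_i+T_i^*T_i$ and $S_iT_i^*=S_i^*T_i=T_iS_i^*=T_i^*S_i=0$, so that $U_i:=S_i+T_i$ is unitary, and finally that $\spacey\otimes\kk=U_2(\spacex\otimes\kk)U_1^*$ by expanding $U_2(x\otimes k)U_1^*$ into its four cross terms $S_2(\cdot)S_1^*$, $S_2(\cdot)T_1^*$, $T_2(\cdot)S_1^*$, $T_2(\cdot)T_1^*$ and identifying them, through the observations of the previous paragraph, with elements of $\spacey\otimes\kk$ built from the expressions $\emm_i(\cdot)\emm_j^*$ and $\en_i^*(\cdot)\en_j$, and conversely.

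I expect the main obstacle to be exactly the bookkeeping in the last paragraph, forced by the frame vectors being only contractions. In Theorem \ref{uni} the exact identities $\emm_i^*\emm_i=I$ and $\en_i\en_i^*=I$ enter both the check that $U_i$ is unitary and the check that the conjugation lands precisely in $\spacey\otimes\kk$; here those identities must everywhere be replaced by the weaker relations $e_i\le I$, $f_i\le I$ together with $xe_1=x$, $e_2x=x$, $f_2y=y$, $yf_1=y$ and the norm-convergence of the defining series, and one has to confirm that the shift-absorption genuinely cancels the defects $I-e_i$ and $I-f_i$, so that no residual term survives in the final identity. Once this is carried through — taking care that every series involved converges in norm and that all operators remain in $\otimes\kk$ — the rest is a transcription of the proof of Theorem \ref{uni}.
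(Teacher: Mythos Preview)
Your overall plan --- replay Theorem~\ref{uni} with $J=\mathbb{N}$ and verify separately that every operation lands in $\otimes\kk$ using the norm-convergence built into Definition~\ref{sigmatro} --- is exactly the paper's approach, and your discussion of why $\emm_2x\emm_1^*\in\spacey\otimes\kk$ etc.\ is correct and is precisely the new content beyond Theorem~\ref{uni}.

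Where you go astray is your last paragraph: the ``obstacle'' you anticipate does not exist, because the frame vectors \emph{are} (co)isometric. You already have both ingredients in hand. You noted that the $\tro_i$ act nondegenerately, and that $e_i:=\emm_i^*\emm_i$ satisfies $me_i=m$ for every $m\in\tro_i$. Take adjoints to get $e_im^*=m^*$; then $e_i$ fixes every vector in $\tro_i^*K_i$, which is dense in $H_i$, so $e_i=I_{H_i}$. The same argument gives $f_i=\en_i\en_i^*=I_{K_i}$. Hence $\emm_i$ is an isometry, $\en_i$ a coisometry, $p_i:=\emm_i\emm_i^*$ and $q_i:=\en_i^*\en_i$ are honest projections (not merely range projections of contractions), and the verification that $U_i=S_i+T_i$ is unitary goes through literally as in Theorem~\ref{uni}. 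There are no defects $I-e_i$, $I-f_i$ to absorb and no extra bookkeeping; the only thing left to check is the $\otimes\kk$-containment you already handled, for which the paper isolates the two representative cases $q_2(\spacex\otimes\kk)\subseteq\spacex\otimes\kk$ and $\emm_2\spacex\subseteq[\tro_2\spacex]^\normcl\otimes C_\infty$.
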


\begin{proof}
    Let $\tro_i\subseteq B(H_i,K_i)$, $i=1,2$, be the $\sigma$-TRO's implementing the equivalence and $(m_{i,k})_{k\in\mathbb{N}}$, $(n_{i,k})_{k\in\mathbb{N}}$ be sequences in $\tro_i$ as in Definition \ref{sigmatro}. 
    We set $\emm_i:=(m_{i,k})_{k=1}^\infty$, $\en_i:=(n_{i,k})_{k=1}^\infty$, considered as operators in $B(H_i,K_i\otimes\ell^2)$ and $B(H_i\otimes\ell^2,K_i)$ respectively, for $i=1,2$. The fact that $\spacex,\spacey$ act nondegenerately, implies that $\emm_i^*\emm_i=I_{H_i}$ and $\en_i\en_i^*=I_{K_i}$ for $i=1,2$. Setting
    \[
    p_i:=\emm_i\emm_i^*\in B(K_i\otimes \ell^2)
    \]
    \[
    q_i:=\en_i^*\en_i\in  B(H_i\otimes\ell^2)
    \]and $U_i:=S_i+T_i$ with the same notation as in the proof of Theorem \ref{uni}, with $J=\mathbb{N}$, where $S_i,T_i$ are given by \eqref{ff} and \eqref{ff2} respectively, we have that $U_1,U_2$ are unitary operators. It remains to show that $\spacey\otimes\kk=U_2(\spacex\otimes\kk)U_1^*$. 

    We argue why $S_2(\spacex\otimes\kk)\subseteq[\tro_2\spacex]^\normcl\otimes\kk$ and by analogous arguments $T_2(\spacex\otimes\kk)\subseteq [\tro_2\spacex]^\normcl\otimes\kk$ and $([\tro_2\spacex]^\normcl\otimes\kk)U_1^*\subseteq [\tro_2\spacex\tro_1^*]^\normcl\otimes\kk=\spacey\otimes\kk$. Then, it follows that $U_2(\spacex\otimes\kk)U_1^*\subseteq \spacey\otimes\kk$ and the fact $U_2^*(\spacey\otimes\kk)U_1\subseteq \spacex\otimes\kk$ is proved identically.

    To this end, the only nontrivial part is to show $q_2(\spacex\otimes\kk)\subseteq \spacex\otimes\kk$ and $\emm_2\spacex\subseteq [\tro_2\spacex]^\normcl\otimes C_\infty$. The first inclusion follows from the fact that
    \[
    q_2(x\otimes E_{k\ell})=\sum_{i=1}^\infty n_{2,i}^*n_{2,k}x\otimes E_{i\ell}
    \]as a norm convergent sum for all $k,\ell\in\mathbb{N}$ and $x\in\spacex$. An analogous argument proves that 
    \[
    \emm_2x=\sum_{i=1}^\infty m_{2,i}x\otimes e_i
    \]as a norm-convergent sum for all $x\in\spacex$, which proves the second inclusion and therefore completes the proof.
\end{proof}

\begin{remark}
    We note that the map $X\mapsto U_2XU_1^*$ in the above proof is the same map as the one constructed in the proof of \cite[Theorem 4.6]{strspaces}, although it is not mentioned in \cite{strspaces} that the map comes from a unitary equivalence.
\end{remark}

\subsection{Stably isomorphic operator systems}

Recall \cite{systems} that two concrete operator systems $\es\subseteq B(H),\ti\subseteq B(K)$ are called strongly TRO-equivalent if there exists a TRO $\tro\subseteq B(H,K)$ such that $I_H\in[\tro^*\tro]^\normcl$, $I_K\in [\tro\tro^*]^\normcl$ (i.e. $\tro$ is non-degenerate) and
\[
\tro\es\tro^*\subseteq \ti \ \ \text{and} \ \ \tro^*\ti\tro\subseteq \es.
\]Using \cite[Lemma 4.9]{strspaces}, an equivalent characterization is that there exists a norm-closed TRO such that
\[
\ti=[\tro\es\tro^*]^\normcl \ \ \text{and} \ \ \es=[\tro^*\ti\tro]^\normcl.
\]

Two (abstract) operator systems $\es,\ti$ are called strongly \textit{$\Delta$-equivalent} if there exist unital complete order embeddings $\phi:\es\to B(H)$, $\psi:\ti\to B(K)$ such that $\phi(\es)\sim_{TRO}\psi(\ti)$.

We have the following:
\begin{theorem} \label{strsys}
    Let $\es\subseteq B(H)$, $\ti\subseteq B(K)$ be strongly TRO-equivalent operator systems. Then, there exists a unitary operator $U\in B(H\otimes\ell^2,K\otimes\ell^2)$ such that 
    \[
    \ti\otimes\kk=U(\es\otimes\kk)U^*.
    \]
\end{theorem}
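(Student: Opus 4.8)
The plan is to reduce the statement to Theorem \ref{opspc}, in exactly the way Theorem \ref{unisys} was reduced to Theorem \ref{uni}: one works with a single TRO and ends up with $U=U_1=U_2$. Two preliminary points must be disposed of before Theorem \ref{opspc} applies. First, $\es$ and $\ti$ act nondegenerately, since an operator system contains the identity, so $\overline{[\es H]}=H=\overline{[\es^* H]}$ and similarly for $\ti$. Second, and this is the real content, the implementing TRO must be shown to be a $\sigma$-TRO in the sense of Definition \ref{sigmatro}.

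For the $\sigma$-TRO claim, fix --- using the definition of strong TRO-equivalence of operator systems together with the equivalent characterization recalled just before the theorem, i.e. \cite[Lemma 4.9]{strspaces} --- a norm-closed TRO $\tro\subseteq B(H,K)$ with $\ti=[\tro\es\tro^*]^\normcl$, $\es=[\tro^*\ti\tro]^\normcl$, $I_H\in[\tro^*\tro]^\normcl$ and $I_K\in[\tro\tro^*]^\normcl$. Since $\tro\tro^*\tro\subseteq\tro$, the space $[\tro^*\tro]^\normcl$ is a $C^*$-algebra, unital with unit $I_H$. The first step is to produce a \emph{finite} tuple $m_1,\dots,m_p\in\tro$ with $\sum_{k=1}^p m_k^* m_k=I_H$. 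Choose a finite sum $S=\sum_{k=1}^p a_k^* b_k$ ($a_k,b_k\in\tro$) with $\|I_H-S\|<\tfrac12$; then $\operatorname{Re}S:=\tfrac12(S+S^*)$ satisfies $\|I_H-\operatorname{Re}S\|<\tfrac12$, so $2\operatorname{Re}S\geq I_H$. The identity $a_k^* b_k+b_k^* a_k=(a_k+b_k)^*(a_k+b_k)-a_k^* a_k-b_k^* b_k$ then gives $c:=\sum_{k=1}^p(a_k+b_k)^*(a_k+b_k)=2\operatorname{Re}S+\sum_k a_k^* a_k+\sum_k b_k^* b_k\geq I_H$, so $c$ is an invertible element of the unital $C^*$-algebra $[\tro^*\tro]^\normcl$, whence $c^{-1/2}\in[\tro^*\tro]^\normcl$. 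Setting $m_k:=(a_k+b_k)c^{-1/2}$, which lies in $\tro$ because $\tro\,[\tro^*\tro]^\normcl\subseteq\tro$, we get $\sum_{k=1}^p m_k^* m_k=c^{-1/2}c\,c^{-1/2}=I_H$. Symmetrically, $I_K\in[\tro\tro^*]^\normcl$ yields a finite tuple $n_1,\dots,n_q\in\tro$ with $\sum_{j=1}^q n_j n_j^*=I_K$. Padding both tuples with zeros, the sequences $(m_k)$ and $(n_j)$ exhibit $\tro$ as a $\sigma$-TRO: the partial sums $\sum_{i\le k}m_i^* m_i$ are increasing and dominated by $I_H$, hence of norm $\le 1$ (and likewise for the $n_j$), while $\sum_k m\,m_k^* m_k=mI_H=m$ and $\sum_k n_k n_k^* n=I_K n=n$ are finite sums, so they converge in norm.

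Now $\es,\ti$ act nondegenerately and are $\sigma$-strongly TRO-equivalent via $\tro_1=\tro_2=\tro$. One then runs the proof of Theorem \ref{opspc} with $\spacex=\es$, $\spacey=\ti$ and this single TRO, using the same distinguished sequences $\emm=(m_k)$, $\en=(n_j)$ on both sides; the operators in formulas \eqref{ff} and \eqref{ff2} then do not depend on the index, so $U_1=U_2=:U$, and the conclusion $\spacey\otimes\kk=U_2(\spacex\otimes\kk)U_1^*$ becomes $\ti\otimes\kk=U(\es\otimes\kk)U^*$ with $U\in B(H\otimes\ell^2,K\otimes\ell^2)$ a unitary.

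The one point I expect to require care is the extraction of the finite tuple $m_1,\dots,m_p$: membership of $I_H$ in $[\tro^*\tro]^\normcl$ only provides an approximating \emph{linear combination} of products $a^* b$, and one cannot simply truncate it. It is the positivity argument above, together with the fact that $[\tro^*\tro]^\normcl$ is a unital $C^*$-algebra --- so that $c^{-1/2}$ stays inside it and $\tro$ absorbs it --- that upgrades the approximation to an exact identity $\sum_k m_k^* m_k=I_H$. Everything else is the bookkeeping already carried out in the proofs of Theorems \ref{uni} and \ref{opspc}.
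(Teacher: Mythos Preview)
Your proof is correct and follows exactly the paper's approach: show the implementing TRO is a $\sigma$-TRO and apply Theorem \ref{opspc} with $\tro_1=\tro_2=\tro$, $\emm_1=\emm_2$, $\en_1=\en_2$ to obtain $U_1=U_2$. The only difference is cosmetic: where the paper cites \cite[Lemma 2.3]{hereditary}, \cite[Theorem 6.1]{bmp} and \cite[Lemma 3.4]{ele} for the $\sigma$-TRO claim, you supply an explicit (and in fact slightly sharper) argument producing \emph{finite} tuples with $\sum m_k^*m_k=I_H$ and $\sum n_jn_j^*=I_K$.
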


\begin{proof}
    Suppose $\tro\subseteq B(H,K)$ is a norm-closed TRO such that
    \[
    \ti=[\tro\es\tro^*]^\normcl \ \ \text{and} \ \ \es=[\tro^*\ti\tro]^\normcl.
    \]
    A standard argument (see \cite[Lemma 2.3]{hereditary}, \cite[Theorem 6.1]{bmp} and \cite[Lemma 3.4]{ele}) proves that $\tro$ is a $\sigma$-TRO, thus Theorem \ref{opspc} applies with $H_1=H_2=H$, $K_1=K_2=K$, $\tro_1=\tro_2=\tro$, $\emm_1=\emm_2$ and $\en_1=\en_2$, so that $U_1=U_2=:U$.
\end{proof}
The above constitutes an alternative proof for the equivalence (i) $\Leftrightarrow$ (iv) of \cite[Theorem 3.8]{systems}, which we state separately as a Corollary:
\begin{corollary}
    Two operator systems $\es$ and $\ti$ are strongly $\Delta$-equivalent (as operator systems) if and only if they are stably isomorphic (in the sense that $\es\otimes\kk\cong\ti\otimes\kk$ via a surjective complete order complete isometry).
\end{corollary}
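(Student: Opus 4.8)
The plan is to prove the two implications separately, extracting the ``only if'' part from Theorem~\ref{strsys} and invoking \cite[Theorem~3.8]{systems} for the converse. For the ``only if'' direction, suppose $\es$ and $\ti$ are strongly $\Delta$-equivalent as operator systems. By definition there are unital complete order embeddings $\phi:\es\to B(H)$, $\psi:\ti\to B(K)$ such that the concrete operator systems $\phi(\es)\subseteq B(H)$ and $\psi(\ti)\subseteq B(K)$ are strongly TRO-equivalent, so Theorem~\ref{strsys} provides a unitary $U\in B(H\otimes\ell^2,K\otimes\ell^2)$ with
\[
\psi(\ti)\otimes\kk=U(\phi(\es)\otimes\kk)U^*.
\]
Since $\phi$ is a unital complete order isomorphism of $\es$ onto $\phi(\es)$ (and likewise $\psi$), realizing $\es$ concretely and extending $\phi,\phi^{-1}$ to unital completely positive maps between the ambient $B(\cdot)$'s via Arveson's theorem, then tensoring with the identity on $\kk$, one checks that $\phi\otimes\mathrm{id}_\kk:\es\otimes\kk\to\phi(\es)\otimes\kk$ and $\psi\otimes\mathrm{id}_\kk:\ti\otimes\kk\to\psi(\ti)\otimes\kk$ are surjective complete order complete isometries. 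Finally $X\mapsto UXU^*$ is a $*$-isomorphism of $B(H\otimes\ell^2)$ onto $B(K\otimes\ell^2)$ carrying $\phi(\es)\otimes\kk$ onto $\psi(\ti)\otimes\kk$, hence restricts to a surjective complete order complete isometry between these two operator systems. Composing the three maps yields $\es\otimes\kk\cong\ti\otimes\kk$.

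For the ``if'' direction, assume $\es\otimes\kk\cong\ti\otimes\kk$ via a surjective complete order complete isometry. In particular $\es$ and $\ti$ are stably isomorphic as operator spaces, so by \cite[Theorem~4.3]{strspaces} they are $\Delta$-equivalent as operator spaces; upgrading this operator-space equivalence to an operator-system one --- replacing the complete isometries by unital complete order embeddings and the pair of TRO's by a single TRO --- is precisely the implication (iv)$\Rightarrow$(i) of \cite[Theorem~3.8]{systems}, which we invoke. Together with the ``only if'' part above, which is a new proof of (i)$\Rightarrow$(iv) through Theorem~\ref{strsys}, this establishes the asserted equivalence.

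I expect the only genuinely delicate points to be, in the forward direction, verifying that $\phi\otimes\mathrm{id}_\kk$ preserves the matrix order in \emph{both} directions (so that it is a complete order isomorphism and not merely a complete isometry) and that restricting the $*$-isomorphism $\mathrm{Ad}_U$ does the same; and, in the reverse direction, the passage from an operator-space TRO-equivalence to an operator-system one, which genuinely relies on the $C^*$-envelope techniques of \cite{systems} and is the reason we do not reprove it here. Everything else is routine composition of maps already constructed.
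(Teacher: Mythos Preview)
Your proposal is correct and follows essentially the same approach as the paper: the forward implication is obtained from Theorem~\ref{strsys} (composed with the obvious complete order isomorphisms $\phi\otimes\mathrm{id}_\kk$, $\psi\otimes\mathrm{id}_\kk$ and $\mathrm{Ad}_U$), while the converse is simply the implication (iv)$\Rightarrow$(i) of \cite[Theorem~3.8]{systems}, which the paper also defers to. Your detour through \cite[Theorem~4.3]{strspaces} in the ``if'' direction is harmless but unnecessary, since you end up citing \cite[Theorem~3.8]{systems} anyway.
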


\subsection*{Acknowledgments}

The author would like to thank G. K. Eleftherakis for his great guidance and useful suggestions for this note. The research described in this paper was carried out within the framework of the National Recovery and Resilience Plan Greece 2.0, funded by the European Union - NextGenerationEU (Implementation Body: HFRI. Project name: NONCOMMUTATIVE ANALYSIS: OPERATOR SYSTEMS AND NONLOCALITY. HFRI Project Number: 015825).

\end{document}